\theoremstyle{plain}
\newtheorem{thm}{Theorem}[section]
\newtheorem{prop}[thm]{Proposition}
\newtheorem{lemma}[thm]{Lemma}
\newtheorem{question}[thm]{Question}
\theoremstyle{definition}
\newtheorem{dfn}[thm]{Definition}
\let\theta=\vartheta
\let\rho=\varrho
\let\phi=\varphi
\def\GG{\mathbb G}
\def\cA{{\mathcal A}}
\def\cF{{\mathcal F}}
\def\cQ{{\mathcal Q}}
\def\cS{{\mathcal S}}
\def\cT{{\mathcal T}}
\def\var{\mathop{\text{\rm Var}}\nolimits}
\def\be{\mathop{\text{\rm Be}}\nolimits}
\def\abs{\mathop{\text{\rm abs}}\nolimits}
\def\({\left(}
\def\){\right)}  
\def\:{\colon}
\def\aas{{a.a.s.}}
\def\th{${}^{\text{th}}$}
\def\rmlabel{\upshape({\itshape \roman*\,})}
\let\polishlcross=\l
\def\l{\ifmmode\ell\else\polishlcross\fi}
\def\rmlabel{\upshape({\itshape \roman*\,})}
\def\qcomma{\text{,}\quad}
\def\section{\@startsection{section}{1}%
  \z@{.7\linespacing\@plus\linespacing}{.5\linespacing}%
  {\normalfont\scshape\centering}}
\renewcommand\section{%
	\@ifstar{%
		\@startsection{section}{1}%
  		\z@{.7\linespacing\@plus\linespacing}{.5\linespacing}%
  		{\normalfont\scshape\centering%
  		}}{%
		\@startsection{section}{1}%
  		\z@{.7\linespacing\@plus\linespacing}{.5\linespacing}%
  		{\normalfont\scshape\centering%
		\S 
  		}}}                   
\theoremstyle{plain}
\let\epsilon=\varepsilon
\newtheoremstyle{note}
  {4pt}
  {4pt}
  {\sl}
  {}
  {\itshape}
  {.}
  {.5em}
  {}
\theoremstyle{note}
\newcommand*\patchAmsMathEnvironmentForLineno[1]{%
\expandafter\let\csname old#1\expandafter\endcsname\csname #1\endcsname
\expandafter\let\csname oldend#1\expandafter\endcsname\csname end#1\endcsname
\renewenvironment{#1}%
{\linenomath\csname old#1\endcsname}%
{\csname oldend#1\endcsname\endlinenomath}}%
\newcommand*\patchBothAmsMathEnvironmentsForLineno[1]{%
\patchAmsMathEnvironmentForLineno{#1}%
\patchAmsMathEnvironmentForLineno{#1*}}%
\begin{document}
\onehalfspace
\footskip=28pt
\shortdate
\yyyymmdddate
\settimeformat{ampmtime}
\date{\today, \currenttime}

\title[Powers of Hamilton cycles in perturbed hypergraphs]%
{Powers of tight Hamilton cycles in randomly perturbed hypergraphs}

\author[W. Bedenknecht]{Wiebke Bedenknecht}
\author[J. Han]{Jie Han}
\author[Y. Kohayakawa]{Yoshiharu Kohayakawa}
\author[G. O. Mota]{Guilherme Oliveira Mota}
\address{Fachbereich Mathematik, Universit\"at Hamburg,
  Bundesstra\ss{}e~55, D-20146 Hamburg, Germany}
\email{Wiebke.Bedenknecht@uni-hamburg.de}

\address{Instituto de Matem\'atica e Estat\'{\i}stica, Universidade de
	S\~ao Paulo, S\~ao Paulo, Brazil}
\email{\{jhan|yoshi\}@ime.usp.br}

\address{Centro de Matem\'atica, Computa\c c\~ao e Cogni\c c\~ao, Universidade Federal do ABC, Santo Andr\'e, Brazil}
\email{g.mota@ufabc.edu.br}

\thanks{%
  The second author was supported by FAPESP (Proc.~2014/18641-5).  The
  third author was supported by FAPESP (Proc.~2013/03447-6) and by
  CNPq (Proc.~459335/2014-6, 310974/2013-5).  The fourth author was
  supported by FAPESP (Proc.~2013/11431-2, 2013/20733-2).
  The collaboration of the authors was supported by the CAPES/DAAD
  PROBRAL programme (Proj.~430/15, 57143515).
}

\keywords{Powers of Hamilton cycles, random hypergraphs, perturbed hypergraphs} 
\subjclass[2010]{
}
\begin{abstract}
  For $k\ge 2$ and $r\ge 1$ such that $k+r\ge 4$, we prove that, for
  any $\alpha>0$, there exists $\epsilon>0$ such that the union of an
  $n$-vertex $k$-graph with minimum codegree
  $\(1-\binom{k+r-2}{k-1}^{-1}+\alpha\)n$
  and a binomial random $k$-graph $\mathbb{G}^{(k)}(n,p)$ with
  $p\ge n^{-\binom{k+r-2}{k-1}^{-1}-\epsilon}$ on the same vertex set
  contains the $r$\th{} power of a tight Hamilton cycle with high
  probability.  This result for $r=1$ was first proved by McDowell and
  Mycroft.
\end{abstract} 

\maketitle


\section{Introduction}
\label{sec:intro}

\subsection{Hamiltonian cycles}
The study of Hamiltonicity (the existence of a cycle as a spanning
subgraph) has been a central and fruitful area in graph theory.  In
particular, by a celebrated result of Karp~\cite{Karp}, the decision
problem for Hamiltonicity in general graphs is known to be
NP-complete.  Therefore it is likely that good characterizations of
graphs with Hamilton cycles do not exist, and it becomes natural to study
sufficient conditions that guarantee Hamiltonicity.  Among a large
variety of such results, the most famous one is the classical theorem
of Dirac from~1952: every $n$-vertex graph ($n\ge 3$) with minimum
degree at least $n/2$ is Hamiltonian~\cite{Di52}.

Another well-studied object in graph theory is the binomial random
graph $\mathbb{G}(n,p)$, which contains $n$ vertices and each pair of
vertices forms an edge with probability $p$ independently from all
other pairs.  P\'osa~\cite{Posa} and Korshunov~\cite{Korshunov}
independently determined the threshold for Hamiltonicity in
$\mathbb{G}(n,p)$, which is~$(\log n)/n$.  This implies that almost
all dense graphs are Hamiltonian.  In this sense the degree constraint
in Dirac's theorem is very strong.  In fact, Bohman, Frieze and
Martin~\cite{BFM} studied the random graph model that starts with a
given, dense graph and adds $m$ random edges.  In particular, they
showed that for every $\alpha>0$ there is $c=c(\alpha)$ such that if
we start with a graph with minimum degree at least~$\alpha n$ and we
add~$cn$ random edges, then the resulting graph is Hamiltonian a.a.s.\
(as usual, we say that an event happens \emph{asymptotically almost
  surely}, or a.a.s., if it happens with probability tending to $1$
as~$n\to\infty$).  By considering the complete bipartite graph with
vertex classes of sizes~$\alpha n$ and~$(1-\alpha)n$, one sees that the
result above is tight up to the value of~$c$.

It is natural to study Hamiltonicity problems in uniform hypergraphs.
Given $k\ge 2$, a \emph{$k$-uniform hypergraph} (in short, \emph{$k$-graph})
$H=(V,E)$ consists of a vertex set $V$ and an edge set $E\subseteq
\binom{V}{k}$; thus, every edge of~$H$  is a $k$-element subset of~$V$. 
Given a $k$-graph $H$ with a set $S$ of $d$ vertices (where $1 \le d
\le k-1$) we define $N_{H} (S)$ to be the collection of $(k-d)$-sets
$T$ such that $S\cup T\in E(H)$, and let $\deg_H(S):=|N_H(S)|$ (the
subscript $H$ is omitted whenever~$H$ is clear from the context). The
\emph{minimum $d$-degree $\delta _{d} (H)$} of $H$ is the minimum of
$\deg_{H} (S)$ over all $d$-vertex sets $S$ in $H$.   
We refer to $\delta _{k-1}(H)$ as the \emph{minimum codegree} of~$H$.

In the last two decades, there has been growing interest in extending
Dirac's theorem to $k$-graphs.  Among other notions of cycles in
$k$-graphs (e.g., Berge cycles), the following `uniform' cycles have
attracted much attention.  For integers $1\le \ell \le k-1$ and
$m\ge 3$, a $k$-graph $F$ with $m(k-\ell)$ vertices and $m$ edges is a
called an \emph{$\ell$-cycle} if its vertices can be ordered
cyclically so that each of its edges consists of $k$ consecutive
vertices and every two consecutive edges (in the natural order of the
edges) share exactly $\ell$ vertices.  Usually $(k-1)$-cycles are also
referred to as \emph{tight} cycles.
We say that a $k$-graph contains a \emph{Hamilton $\ell$-cycle} if it
contains an $\ell$-cycle as a spanning subgraph. 
In view of Dirac's theorem, minimum $d$-degree conditions that
force Hamilton $\ell$-cycles (for $1\le d,\,\ell\le k-1$) have been
studied intensively~\cite{BMSSS1, BMSSS2,  BHS, CzMo, GPW, HS, HZ2,
  HZ1, KKMO, KMO, KO, RRRSS, RoRu14, RoRuSz06, RRS08, RRS11}. 

Let $\mathbb{G}^{(k)}(n,p)$ denote the binomial random $k$-graph on
$n$ vertices, where each $k$-tuple forms an edge independently with
probability $p$.  The threshold for the existence of Hamilton
$\ell$-cycles has been studied by Dudek and Frieze~\cite{DuFr1,
  DuFr2}, who proved that for $\ell=1$ the threshold is
$(\log n)/n^{k-1}$, and for $\ell\ge 2$ the threshold is
$1/n^{k-\ell}$ (they also determined sharp thresholds for
every~$k\ge 4$ and $\ell=k-1$).

Krivelevich, Kwan and Sudakov~\cite{KKS} considered randomly perturbed
$k$-graphs, which are $k$-graphs obtained by adding random edges to a
fixed $k$-graph.  They proved the following theorem, which mirrors the
result of Bohman, Frieze and Martin~\cite{BFM} for randomly perturbed
graphs mentioned earlier.

\begin{thm}\label{thm:KKS}\cite{KKS}
  For any~$k\geq2$ and~$\alpha>0$, there is~$c_k=c_k(\alpha)$ for
  which the following holds.  Let~$H$ be a $k$-graph on
  $n\in(k-1)\mathbb{N}$ vertices with $\delta_{k-1}(H)\geq \alpha n$.
  If $p=c_k n^{-(k-1)}$, then the union $H\cup \mathbb{G}^{(k)}(n,p)$
  a.a.s.\ contains a Hamilton $1$-cycle.
\end{thm}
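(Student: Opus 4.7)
The plan is to prove Theorem~\ref{thm:KKS} via the absorption method, in four phases.

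\emph{Phase 1 (Absorbing path).} The first goal is to build, using $H$ together with a small number of random edges, a short loose path $P_A$ with the property that for every set $R\subseteq V(H)\setminus V(P_A)$ of size at most $\eta^2 n$ (for a small $\eta\ll\alpha$) there is a loose path on $V(P_A)\cup R$ in $H\cup \GG^{(k)}(n,p)$ with the same endpoints as $P_A$. Since $\delta_{k-1}(H)\geq\alpha n$, a direct counting argument shows that every vertex $v\in V(H)$ has $\Omega(n^{2k-2})$ \emph{absorbing gadgets} in $H$, i.e.\ short ordered tuples forming, together with $v$, a local loose-path swap with fixed endpoints. Sampling a random family of $\eta n$ vertex-disjoint candidate absorbers and applying Chernoff bounds, a.a.s.\ every $v$ still has $\Omega(\eta^2 n)$ absorbers available. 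These absorbers are then linked into a single loose path $P_A$ by using $\GG^{(k)}(n,p)$ to supply short random bridges; each bridge amounts to finding one random edge through a prescribed $(k-1)$-set, which exists a.a.s.\ provided $c_k$ is sufficiently large.

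\emph{Phase 2 (Reservoir).} Next I would set aside a uniformly random reservoir $R_0\subseteq V(H)\setminus V(P_A)$ of size $\mu n$ with $\mu\ll\eta^2$. Standard concentration shows that every $(k-1)$-set of $V(H)\setminus V(P_A)\setminus R_0$ still has $\Omega(\alpha n)$ neighbours inside $R_0$ via $H$. The reservoir will later be used as a flexible pool of vertices to absorb end-effects when terminating the long path of Phase~3 at prescribed vertices.

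\emph{Phase 3 (Long loose path).} Using only edges of $H$, greedily grow a loose path $P_L$ inside $V(H)\setminus V(P_A)$ that exhausts all but at most $\eta^2 n$ vertices. Extension from the current endpoint is possible at every step because the codegree condition $\delta_{k-1}(H)\geq\alpha n$ guarantees that the relevant $(k-1)$-set has $\geq\alpha n-o(n)$ $H$-neighbours outside what has been used; the divisibility $n\in (k-1)\NN$ lets the loose structure proceed cleanly. This phase uses \emph{no} random edges at all.

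\emph{Phase 4 (Closing and absorbing).} Finally, use a handful of random edges to connect the two endpoints of $P_L$ to the two endpoints of $P_A$, producing a loose cycle $C\subseteq H\cup\GG^{(k)}(n,p)$ covering $V(H)\setminus R$ for some leftover set $R$ with $|R|\leq\eta^2 n$. Iteratively invoking the absorbing property of $P_A$ inserts each vertex of $R$ into $C$ and yields a Hamilton $1$-cycle.

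\emph{Main obstacle.} The delicate technical point is the economy of random edges. With $p=c_k n^{-(k-1)}$ one expects only $\Theta(n)$ edges of $\GG^{(k)}(n,p)$ in total, whereas a loose Hamilton cycle itself has $n/(k-1)$ edges; the random edges can therefore be used only sparingly, for short bridges in Phase~1 and for the closing step of Phase~4, and the bulk of the cycle must be supplied by $H$. Certifying that \emph{every} configuration that might be required in Phases~1 and~4 is met by some random edge requires a careful union bound over polynomially many $(k-1)$-sets together with polynomially many choices of pivots, and this union bound is precisely what forces $c_k$ to be chosen as a (large) function of $\alpha$.
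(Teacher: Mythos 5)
Two preliminary remarks: this statement is quoted in the paper from Krivelevich, Kwan and Sudakov~\cite{KKS} and is not proved there, so there is no in-paper proof to compare with; judged on its own merits, your proposal has two fatal gaps. The main one is Phase~3. The codegree hypothesis $\delta_{k-1}(H)\geq\alpha n$ does \emph{not} allow you to grow, inside $H$ alone, a loose path covering all but $\eta^2 n$ vertices: the greedy extension argument only works while the set of used vertices has size below roughly $\alpha n$, since after that all $\geq\alpha n$ extensions of the relevant set may lie inside the used vertices. In fact no such path need exist at all: let $A$ be a set of $\alpha n$ vertices and let $H$ consist of all $k$-sets meeting $A$; then $\delta_{k-1}(H)\geq\alpha n-k$, but every edge of a loose path meets $A$ and every vertex lies in at most two edges of a loose path, so any loose path in $H$ has at most $2\alpha n$ edges and covers $O(\alpha n(k-1))$ vertices. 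This is precisely the ``bipartite'' construction showing the theorem is tight with $\Theta(n)$ random edges, and it also shows that your ``main obstacle'' analysis is inverted: with $p=c_kn^{-(k-1)}$ you have $\Theta(c_k n)$ random edges against the $n/(k-1)$ edges of a loose Hamilton cycle, so the random edges can --- and for small $\alpha$ must --- supply a constant proportion of the cycle (e.g.\ an almost-spanning family of loose paths or an almost perfect matching), while the dense graph $H$ is what should provide connections and absorption.

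The second gap is the use of random ``bridges'' in Phases~1 and~4. Finding a random edge through a prescribed $(k-1)$-set is not an a.a.s.\ event at this density: the number of edges of $\mathbb{G}^{(k)}(n,p)$ containing a fixed $(k-1)$-set is binomial with mean about $c_k n^{2-k}$, which is $o(1)$ for $k\geq3$ and only a constant for $k=2$, so a.a.s.\ almost all prescribed $(k-1)$-sets lie in no random edge whatsoever; a union bound over polynomially many configurations cannot rescue events whose individual probabilities do not tend to $1$. At $p=c_kn^{-(k-1)}$ one can only guarantee random edges inside prescribed \emph{linear-sized} vertex sets. Connections should instead be made in $H$: the codegree bound propagates downwards, e.g.\ every pair of vertices lies in $\Omega(\alpha n^{k-2})$ edges of $H$, so two loose-path endpoints can be joined by an $H$-edge avoiding any small forbidden set, and absorbers can likewise be built and linked within $H$. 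So the absorption skeleton is reasonable, but the division of labour between $H$ and the random edges must be interchanged, and Phase~3 must be redone with the random hypergraph providing the near-spanning structure.
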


The authors of~\cite{KKS} also obtained a similar result for perfect
matchings.  These results are tight up to the value of~$c_k$, as shown
by a simple `bipartite' construction.  McDowell and
Mycroft~\cite{McMy} and, subsequently, Han and Zhao~\cite{HZ_pert} extended Theorem~\ref{thm:KKS} to Hamilton
$\ell$-cycles and other degree conditions.

\subsection{Powers of Hamilton cycles}
Powers of cycles are natural generalizations of cycles.  Given
$k\ge 2$ and~$r\ge 1$, we say that a $k$-graph with~$m$ vertices is an
\emph{$r$\th{} power of a tight cycle} if its
vertices can be ordered cyclically so that each consecutive $k+r-1$
vertices span a copy of~$K_{k+r-1}^{(k)}$, the complete $k$-graph on
$k+r-1$ vertices, and there are no other edges than the ones forced by
this condition.  This extends the notion of (tight) cycles in
hypergraphs, which corresponds to the case $r=1$.

The existence of powers of paths and cycles has also been intensively
studied.  For example, the famous P\'osa--Seymour conjecture, which
was proved by Koml\'os, S\'ark\"ozy and
Szemer\'edi~\cite{KoSaSz98a,KoSaSz98b} for sufficiently large graphs,
states that every $n$-vertex graph with minimum degree at least
$r n/(r+1)$ contains the $r$\th{} power of a Hamilton cycle.  A general
result of Riordan~\cite{Ri00} implies that, for $r\geq 3$, the
threshold for the existence of the $r$\th{} power of a Hamilton cycle in
$\mathbb{G}(n,p)$ is $n^{-1/r}$.  The case~$r=2$ was investigated by
K\"uhn and Osthus~\cite{KuOs12}, who proved that
$p\ge n^{-1/2+\epsilon}$ suffices for the existence of the square of a
Hamilton cycle in $\mathbb{G}(n,p)$, which is sharp up to
the~$n^\epsilon$ factor.  This was further sharpened by Nenadov and
{\v S}kori{\'c}~\cite{NeSk16}.
Moreover, Bennett, Dudek and Frieze~\cite{BeDuFr16} proved a result
for the square of a Hamilton cycle in randomly perturbed graphs,
extending the result of Bohman, Frieze and Martin~\cite{BFM}.

\begin{thm}\label{thm:BDF}\cite{BeDuFr16}
  For any $\alpha>0$ there is~$K>0$ such that the following holds.
  Let~$G$ be a graph with $\delta(G)\ge (1/2+\alpha) n$ and suppose
  $p=p(n)\ge K n^{-2/3}\log^{1/3}n$.  Then the union
  $H\cup \mathbb{G}(n,p)$ a.a.s.\ contains the square of a Hamilton
  cycle.
\end{thm}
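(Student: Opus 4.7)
I would follow the standard \emph{absorbing method}, building the square of a Hamilton cycle in $G \cup \mathbb{G}(n,p)$ in three stages: construct an absorbing square-path $P_A$, produce an almost-spanning collection of square-paths covering the rest, and splice everything together into a single square-cycle that swallows any leftover vertices.

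\emph{Stage~1 (Absorber construction).} Using the random edges together with the density of $G$, I would produce for each vertex $v$ a small \emph{absorber gadget} $A_v$: $A_v$ spans the square of a path between two designated endpoint pairs, and $V(A_v) \cup \{v\}$ also spans the square of a path with the same endpoint pairs. A natural design uses a constant number of $G$-edges to form a backbone and three random edges through $v$ (a random triangle $\{v,x,y\}$ attached to the backbone by deterministic edges of $G$). The expected number of candidate absorbers for a fixed $v$ is then of order $n^{2}p^{3}$; with $p \ge Kn^{-2/3}\log^{1/3}n$ this is $\Omega(K^{3}\log n)$, so a Chernoff plus union-bound argument gives every vertex at least $\Omega(\log n)$ absorbers a.a.s.\ A greedy extraction yields a family of pairwise disjoint absorbers, and the connecting step described below strings them into a single square-path $P_A$ of length $o(n)$ that can absorb any sufficiently small set $L \subset V \setminus V(P_A)$.

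\emph{Stages~2 and~3 (Almost cover and splicing).} After setting aside a random reservoir $W$ of size $o(n)$ for splicing, work on $V' := V \setminus (V(P_A) \cup W)$, where still $\delta(G[V']) \ge (1/2+\alpha/2)n$. Apply Szemer\'edi's regularity lemma to $G[V']$ together with the inherited random edges, and in the reduced graph find a near-perfect tiling by triangles (or a long triangle-path); then use regularity pairs, the Blow-up Lemma, and random edges where needed to realise each reduced triangle as a square-path $Q_i$ in $G \cup \mathbb{G}(n,p)$. A \emph{connecting lemma} for squares of paths in the spirit of~\cite{KuOs12, NeSk16} then splices any two $Q_i$ through a short bridge using vertices of $W$ and a bounded number of random edges. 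Concatenating all $Q_i$ into one long square-path $P^{*}$, absorbing $V \setminus V(P^{*})$ into $P_A$, and performing one final connection closes $P^{*}$ into the square of a Hamilton cycle.

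The \textbf{main obstacle} will be Stage~2. Since $(1/2+\alpha) < 2/3$, the Hajnal--Szemer\'edi threshold for a triangle factor is not met by $G$ alone, so the regularity reduced graph cannot be tiled by triangles using only $G$-edges --- one must combine $G$'s minimum degree with a controlled injection of random edges. The delicate balance is showing that the \emph{same} random graph at $p \asymp n^{-2/3}\log^{1/3}n$ simultaneously supports the absorbers in Stage~1, the structural repairs needed to extend the reduced-graph tiling into full square-paths in Stage~2, and the short connections in Stage~3, all while keeping a unified exceptional set $L$ small enough to be absorbed by $P_A$.
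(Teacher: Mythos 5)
First, a point of orientation: Theorem~\ref{thm:BDF} is not proved in this paper at all; it is quoted from Bennett, Dudek and Frieze~\cite{BeDuFr16} as motivation, so there is no in-paper argument to measure your attempt against. In fact the machinery developed here (Lemma~\ref{lm:gnp} together with the Connecting Lemma~\ref{lm:conn} and the Absorbing Lemma~\ref{lm:abs}) yields, for $k=r=2$, a perturbed square-of-a-Hamilton-cycle result only for $p\ge n^{-1/2-\epsilon}$ with a small $\epsilon$, and the authors explicitly remark that matching the $n^{-2/3}$-type bound of Theorem~\ref{thm:BDF} is beyond their method. So you are aiming at a statement strictly stronger than anything this paper establishes, and your submission must stand on its own.

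As a proof it does not: it is a plan whose decisive step is left open. You yourself flag Stage~2 as the ``main obstacle'' and do not resolve it. Since $(1/2+\alpha)n$ is below the $2n/3$ threshold, the reduced graph of $G$ need not admit even a near-perfect triangle tiling (a graph with an independent set of size $(1/2-\alpha)n$ leaves $\Theta(n)$ vertices uncoverable by triangles), and random edges at density $n^{-2/3+o(1)}$ cannot be fed into a regularity/Blow-up argument in the way you describe, because they create no regular pairs of positive density between clusters; showing how such sparse random edges repair the tiling and still allow square-paths to be embedded is exactly where the exponent $-2/3$ must be earned, i.e.\ it is the content of the theorem rather than a technicality. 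There is also a quantitative mismatch in your absorption budget: with absorbers built from a random triangle through $v$, a fixed vertex lies in only $\Theta(n^2p^3)=\Theta(K^3\log n)$ such triangles, so after extracting a vertex-disjoint family each vertex has $O(\log n)$ absorbers and greedy absorption can swallow a leftover of size $O(\log n)$ only; but a regularity-based almost-cover leaves $\Theta(\epsilon n)$ vertices uncovered, and nothing in the proposal reconciles these two scales (one would need absorbers using far fewer random edges per absorbed vertex, or a cover missing only polylogarithmically many vertices, either of which requires a new idea). Stage~1's expectation count is the right calculation at $p\ge Kn^{-2/3}\log^{1/3}n$, but until Stage~2 and the absorption bookkeeping are actually carried out, the proposal cannot be credited as a proof of Theorem~\ref{thm:BDF}.
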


Note that in Theorem~\ref{thm:BDF} the randomness that is required is much weaker than the one needed in the result for the pure random model (which is essentially $n^{-1/2}$). The authors of~\cite{BeDuFr16} also asked for similar results for higher powers of Hamilton cycles in randomly perturbed graphs.

Parczyk and Person~\cite[Theorem~3.7]{PP} proved that, for $k\geq 3$
and $r\geq 2$, the threshold for the existence of an $r$\th{} power of a
tight Hamilton cycle in the random $k$-graph $\mathbb{G}^{(k)}(n,p)$
is $n^{-\binom{k+r-2}{k-1}^{-1}}$.  Our main result,
Theorem~\ref{main} below, shows that if we consider randomly perturbed
$k$-graphs $H\cup \mathbb{G}^{(k)}(n,p)$ with $\delta_{k-1}(H)$
reasonably large, then
$p=p(n)\ge n^{-\binom{k+r-2}{k-1}^{-1}-\epsilon}$ is enough to
guarantee the existence of an $r$\th{} power of a tight Hamilton cycle
with high probability.

\begin{thm}[Main result]\label{main}
  For all integers $k\ge 2$ and $r\ge 1$ such that $k+r\ge 4$ and
  $\alpha>0$, there is $\epsilon>0$ such that the following holds.
  Suppose~$H$ is a $k$-graph on $n$ vertices with
  \begin{equation}
    \label{eq:main_min_deg}
    \delta_{k-1}(H)\ge \left( 1- \binom{k+r-2}{k-1}^{-1} + \alpha
    \right) {n}
  \end{equation}
  and $p=p(n)\ge n^{-\binom{k+r-2}{k-1}^{-1}-\epsilon}$.  Then~\aas~the
  union $H\cup \mathbb{G}^{(k)}(n,p)$ contains the $r$\th{} power of
  a tight Hamilton cycle.
\end{thm}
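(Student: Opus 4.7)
The plan is to employ the absorption method, adapted to the randomly perturbed setting as in \cite{KKS} and \cite{McMy}. Set $d:=\binom{k+r-2}{k-1}$. The codegree bound~\eqref{eq:main_min_deg} is calibrated so that given any $(k+r-2)$-tuple $(v_1,\dots,v_{k+r-2})$ that forms the end of an $r$\th{} power of a tight path, the set of vertices $w$ for which $\{v_1,\dots,v_{k+r-2}\}\cup\{w\}$ spans $K_{k+r-1}^{(k)}$ in $H$ has size at least $\alpha d n$: this follows from a union bound over the $d$ many $(k-1)$-subsets of $\{v_1,\dots,v_{k+r-2}\}$, each of which has codegree at least $(1-d^{-1}+\alpha)n$ in $H$. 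Hence $H$ alone permits $\Omega(n)$ one-vertex tight-power extensions at every step, and the random graph only needs to be called upon for the more global, clique-like structures.

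The proof then proceeds in the classical four steps. First, a small reservoir $R\subset V(H)$ of size $o(n)$ is chosen by alteration so that for most ordered pairs of $(k+r-2)$-tuples a short $r$\th{} power of a tight path through $R$ connects them; the codegree condition gives enough slack for a greedy construction of such connectors. Second, for each $v\in V(H)$ we define an \emph{absorber} for $v$ to be a constant-size $r$\th{} power of a tight path $A_v$ in $H\cup \mathbb{G}^{(k)}(n,p)$ that can be rewired by inserting $v$ at a designated position so that the result is still an $r$\th{} power of a tight path. The central claim is that each vertex has polynomially many absorbers a.a.s.; a standard random-selection argument (Markov together with Chernoff) then produces an absorbing $r$\th{} power of a tight path $P_A$ of size $o(n)$, disjoint from $R$, capable of absorbing any small subset of the remaining vertices. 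Third, using the codegree condition restricted to $V(H)\setminus(V(P_A)\cup R)$, one greedily builds an $r$\th{} power of a tight path $P_L$ covering all but $o(n)$ of the remaining vertices, using either hypergraph regularity or a direct extension-with-discard scheme in the spirit of the proofs of Dirac-type theorems for powers of cycles. Fourth, we close up by using two connectors through $R$ to link the two ends of $P_L$ to the two ends of $P_A$, producing an $r$\th{} power of a tight cycle whose complement is small and absorbed into $P_A$.

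The main obstacle, and the principal novelty over the $r=1$ case settled by McDowell and Mycroft, lies in the construction of absorbers and connectors for $r\ge 2$. At the chosen threshold $p\ge n^{-d^{-1}-\epsilon}$, the random $k$-graph $\mathbb{G}^{(k)}(n,p)$ reliably supplies a single copy of $K_{k+r-1}^{(k)}$ inside any prescribed dense neighborhood of linear size, but one cannot afford to ask it for several such cliques sharing a pivot. Each new vertex appended to an $r$\th{} power of a tight path must satisfy $d$ simultaneous codegree constraints, so the absorber gadget has to be engineered to ask the random graph for only one $K_{k+r-1}^{(k)}$ at a time while letting the codegree condition on $H$ supply every other constraint. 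This balance is precisely what the exponent $\binom{k+r-2}{k-1}^{-1}$ is tuned to permit. Quantifying the absorber count per vertex and checking its concentration is the technical heart of the argument; once that is established, the reservoir, almost-spanning and closure steps follow the now-standard template of \cite{KKS} and \cite{McMy} with the codegree-driven extension estimate above used in place of the simpler $r=1$ version.
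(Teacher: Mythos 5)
The decisive gap is your third step. Under the stated codegree condition $\delta_{k-1}(H)\ge(1-c+\alpha)n$ with $c=\binom{k+r-2}{k-1}^{-1}$, the deterministic graph $H$ alone need not contain an almost-spanning $r$th power of a tight path, so neither a greedy ``extension-with-discard'' scheme nor regularity applied to $H$ can produce your path $P_L$. Already for $k=2$, $r=2$ the hypothesis is $\delta(H)\ge(1/2+\alpha)n$, and the complete $3$-partite graph with parts of sizes $(1/2-2\alpha)n$, $(1/4+\alpha)n$, $(1/4+\alpha)n$ satisfies it; every triangle meets each part exactly once, and the square of a path on $m$ vertices contains $\lfloor m/3\rfloor$ vertex-disjoint triangles, so any square path there has at most $(3/4+3\alpha)n+2$ vertices. (The paper's concluding remarks stress exactly this: the codegree bound is deliberately below the extremal threshold for $H$ alone, cf.\ Pikhurko's example for $k=3$, $r=2$.) The paper instead takes the almost-spanning cover from the random graph: Proposition~\ref{prop:phiP} gives $\Phi_{P^r_b}(n,p)\ge Cn$ for constant $b$, Lemma~\ref{lm:gnp}\ref{item-I} (Janson) then guarantees a copy of $P_m^r$, $m=g^{-1}(1/(2\epsilon))$, inside \emph{every} linear-sized vertex subset of a fresh copy of $\mathbb{G}^{(k)}(n,p')$, and one greedily extracts about $\sqrt\epsilon\, n$ vertex-disjoint such paths covering all but $o(n)$ vertices, linking them with connectors. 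Your (correct) observation that $H$ allows $\ge \alpha\binom{k+r-2}{k-1}n$ one-vertex extensions only supports paths of length $O(\alpha n)$ and cannot beat the tripartite obstruction above.

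A second, related gap is the claim that connectors through the reservoir can be built ``greedily using the codegree condition,'' i.e.\ inside $H$. Forward extension costs only $d:=\binom{k+r-2}{k-1}$ codegree constraints per new vertex, but landing on a prescribed ordered end $B$ is different: writing $h=k+r-1$, the connector vertex placed immediately before $B$ lies in $d$ edges with its backward window \emph{and} in another $d$ edges inside the window formed by it together with the first $h-1$ vertices of $B$, so it faces at least $2d$ constraints; the union bound then excludes up to $2d\bigl(\tfrac1d-\alpha\bigr)n\ge n$ vertices for small $\alpha$, and the greedy argument collapses at the junction. The paper's Connecting Lemma avoids this by letting $H$ supply only the two half-extensions of the ends ($d$ constraints per vertex, exactly your estimate) and asking the random graph for an entire labelled $P_{2h}^r$ on the resulting $\Omega(n^{2h})$ candidate ordered $2h$-sets -- in particular the junction edges come from $\mathbb{G}^{(k)}(n,p)$ -- with the count controlled by $\Phi_{P_{2h}^r}\ge Cn$, Janson, and a random sparsification; the Absorbing Lemma uses the same mechanism ($H$ provides the copy of $P_{2h+1}^r$ containing $v$, the random graph provides $P_{2h}^r$ on the gadget without $v$). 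This also corrects your guiding heuristic: at $p\ge n^{-c-\epsilon}$ one can and must ask the random graph for whole constant-size powers of tight paths within prescribed candidate families, not merely ``one $K_{k+r-1}^{(k)}$ at a time''; a gadget in which the random graph contributes a single clique and $H$ everything else hits the same doubled-constraint problem at the insertion point, and your ``central claim'' that every vertex has many absorbers is asserted rather than proved. So while your skeleton (absorb--connect--cover--close) matches the paper, the two places where you lean on $H$ alone -- the connectors and the almost-spanning path -- are precisely where the argument must lean on the random graph instead.
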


We remark that our proof only gives a small $\epsilon$, and it
would be interesting to know if one can get a larger gap in comparison
with the result in the purely random model, as in
Theorem~\ref{thm:BDF}.  We remark that the case $k\ge 3$ and $r=1$ of
Theorem~\ref{main} was first proved by McDowell and
Mycroft~\cite{McMy}.
Other results in randomly perturbed graphs can be found in~\cite{BTW, BMPP, KKS2, BHKMPP, HZ_pert}.

The core of the proof of Theorem~\ref{main} follows the \textit{Absorbing Method} introduced by R\"odl, Ruci\'nski, and Szemer\'edi in~\cite{RoRuSz06}, combined with results concerning binomial random hypergraphs.

This paper is organized as follows.  In Section~\ref{sec:random} we
prove some results concerning random hypergraphs.
Section~\ref{sec:abs-con} contains two essential lemmas in our
approach, namely, Lemma~\ref{lm:conn} (Connecting Lemma) and
Lemma~\ref{lm:abs} (Absorbing Lemma).  In Section~\ref{main} we prove
our main result, Theorem~\ref{main}.  Some remarks concerning the
hypotheses in Theorem~\ref{main} are given in
Section~\ref{sec:concluding}.  Throughout the paper, we omit floor and
ceiling functions.

\section{Subgraphs of random hypergraphs}\label{sec:random}

In this section we prove some results related to binomial random
$k$-graphs.  We will apply Chebyshev's inequality and Janson's inequality to prove some concentration results that we shall
need.
 For convenience, we state these two inequalities in the form we need (inequalities~\eqref{eq:2} and~\eqref{eq:1} below follow, respectively, from Janson's and Chebyshev's inequalities; see,
e.g.,~\cite[Theorem~2.14]{JLR}).

We first recall Janson's inequality.
Let $\Gamma$ be a finite set and let $\Gamma_p$ be a random subset of $\Gamma$ such that each element of $\Gamma$ is included in $\Gamma_p$ independently with probability $p$.
Let $\cS$ be a family of non-empty subsets of $\Gamma$ and for each $S\in \cS$, let $I_S$ be the indicator random variable for the event $S\subseteq \Gamma_p$.
Thus each $I_S$ is a Bernoulli random variable $\be(p^{|S|})$.
Let $X:=\sum_{S\in \cS} I_S$ and $\lambda := \mathbb E(X)$.
Let $\Delta_{X} := \sum_{S\cap T\neq\emptyset}\mathbb{E}(I_S I_T)$, where the sum is over all ordered pairs $S, T\in \cS$ (note that the sum includes the pairs $(S,S)$ with $S\in \cS$).
Then Janson's inequality says that, for any $0\le t\le \lambda$,
\begin{equation}
\mathbb{P}(X\leq \lambda -t)\leq \exp \left ( -\dfrac{t^2}{2\Delta_{X}}\right ). \label{eq:2}
\end{equation}
Next note that $\var(X)=\mathbb E(X^2) - \mathbb E(X)^2 \le \Delta_X$.
Then, by Chebyshev's inequality,
\begin{equation}
\mathbb{P}(X\ge 2\lambda) \le \frac{\var(X)}{\lambda^2} \le \frac{\Delta_X}{\lambda^2}. \label{eq:1}
\end{equation}

Consider the random $k$-graph $\mathbb{G}^{(k)}(n, p)$ on an $n$-vertex set $V$.
Note that we can view $\mathbb{G}^{(k)}(n, p)$ as $\Gamma_p$ with $\Gamma = \binom{V}k$.
For two $k$-graphs $G$ and $H$, let $G\cap H$ (or $G\cup H$) denote the $k$-graph with vertex set $V(G)\cap V(H)$ (or $V(G)\cup V(H)$) and edge set $E(G)\cap E(H)$ (or $E(G)\cup E(H)$).
Finally, let
\begin{equation*}
\Phi_F = \Phi_F(n, p)= \min \{n^{v_H} p^{e_H}: H\subseteq F\text{ and } e_H>0\}.
\end{equation*}
The following simple proposition is useful.

\begin{prop}\label{prop:1}
  Let $F$ be a $k$-graph with $s$ vertices and $f$ edges and let
  $G:=\mathbb{G}^{(k)}(n, p)$.  Let $\cA$ be a family of ordered
  $s$-subsets of $V=V(G)$.  For each~$A\in\cA$, let~$I_A$ be the
  indicator random variable of the event that $A$ spans a labelled
  copy of~$F$ in~$G$.  Let $X=\sum_{A\in \cA} I_A$.  Then
  $\Delta_{X} \leq s! 2^{2s} n^{2s}p^{2f}/\Phi_F$.
\end{prop}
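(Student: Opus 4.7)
The plan is to unfold
\[\Delta_X = \sum_{(A,B)\in\cA^2\,:\,E(A)\cap E(B)\neq\emptyset}\mathbb{E}(I_AI_B),\]
where $E(A)$ denotes the edge set of the labelled copy of $F$ spanned by the ordered $s$-tuple $A$, and to split the sum according to $j:=|V(A)\cap V(B)|$. A nonempty edge overlap forces $j\ge k$, so only $k\le j\le s$ contribute. For any such pair, pulling the overlap back along the $A$-embedding $v_i\mapsto a_i$ produces an honest subgraph $H\subseteq F$ with $v_H=j$ and $e_H=|E(A)\cap E(B)|\ge 1$, and
\[\mathbb{E}(I_AI_B)=p^{|E(A)\cup E(B)|}=p^{2f-e_H}.\]

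The crucial step is a single invocation of the definition of $\Phi_F$: since $H\subseteq F$ with $e_H\ge 1$, one has $n^{v_H}p^{e_H}\ge\Phi_F$, hence $p^{-e_H}\le n^j/\Phi_F$ and
\[\mathbb{E}(I_AI_B)\le \frac{n^j\,p^{2f}}{\Phi_F}.\]
This inequality is really the only content of the argument; it is exactly why $\Phi_F$ is defined the way it is.

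What remains is a routine count of the number of ordered pairs $(A,B)$ with $|V(A)\cap V(B)|=j$. I would pick $A$ (at most $n^s$ choices), choose which $j$ vertices of $V(A)$ lie in $V(B)$ (at most $\binom{s}{j}$ ways), place those $j$ values in ordered slots of $B$ (at most $s!/(s-j)!$ ways), and fill the remaining $s-j$ slots of $B$ with vertices outside $V(A)$ (at most $n^{s-j}$ ways). Multiplying, there are at most $2^s\cdot s!\cdot n^{2s-j}$ such pairs, so
\[
\Delta_X \le \sum_{j=k}^{s} 2^s\cdot s!\cdot n^{2s-j}\cdot\frac{n^j p^{2f}}{\Phi_F} = (s-k+1)\cdot 2^s\cdot s!\cdot \frac{n^{2s}p^{2f}}{\Phi_F} \le s!\cdot 2^{2s}\cdot\frac{n^{2s}p^{2f}}{\Phi_F},
\]
using $s-k+1\le s\le 2^s$ at the end. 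The main subtlety is the bookkeeping that turns the overlap of two labelled copies of $F$ into a bona fide subgraph of $F$ to which the definition of $\Phi_F$ can be applied; no substantive obstacle is expected.
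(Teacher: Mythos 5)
Your proof is correct and follows essentially the same route as the paper: both decompose the sum defining $\Delta_X$ by the vertex-overlap of the two labelled copies, pull the overlap back along the $A$-embedding to a subgraph of $F$ to which the definition of $\Phi_F$ applies, and count the pairs with a given overlap by roughly $2^s s!\, n^{2s-j}$. The only cosmetic difference is that the paper indexes the decomposition by the exact pullback set $T\subseteq V(F)$ (at most $2^s$ choices, using $F[T]$ as the subgraph) whereas you index by the overlap cardinality $j$ (at most $s$ choices, using the shared-edge subgraph); both yield the same final factor $2^{2s}$.
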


\begin{proof}
For each ordered $s$-subset $A$ of $V$, let $\alpha_A$ be the
bijection from $V(F)$ to $A$ following the orders of $V(F)$ and $A$.  Let~$F_A$ be the labelled copy of $F$ spanned on $A$.
For any $T\subseteq V(F)$ with $|F[T]|>0$, denote by $W_{T}$ the set of all pairs $A,\,B\in \cA$ such that $A\cap B=\alpha_A(T)$.
If~$T$ has $s'$ vertices and $F[T]$ has $f'$ edges, then for every $\{A, B\}\in W_{T}$, $F_A\cup F_B$ has exactly $2s-s'$ vertices and at least $2f-f'$ edges. 
Therefore, we can bound $\Delta_{X}$ by
\[
\Delta_{X} \le \sum_{T\subseteq V(F)} |W_{T}| p^{2f - f'}\, .
\]

Given integers $n$ and $b$, let $(n)_b:=n(n-1)(n-2)\cdots (n-b+1) = n!/(n-b)!$.
Note that there are at most $\binom{n}{2s-s'}$ choices for the vertex set of $F_A\cup F_B$, and there are at most
\[
(2s-s')_s\cdot\binom{s}{s'}s!\le (2s-s')! s!2^{s}
\]
ways to label each $(2s-s')$-set to get $\{A, B\}$.
Thus we have $|W_{T}|\le s! 2^{s}n^{2s-s'}$ and
\[
 \Delta_{X} \le \sum_{T\subseteq V(F)} s! 2^{s}n^{2s-s'} p^{2f - f'}\le \sum_{T\subseteq V(F)} s!2^{s}n^{2s} p^{2f}/\Phi_F \leq s! 2^{2s} n^{2s}p^{2f}/\Phi_F,
\]
because there are at most $2^s$  choices for $T$. 
\end{proof}

The following lemma gives the properties of $\mathbb{G}^{(k)}(n,p)$
that we will use.  Throughout the rest of the paper, we write
$\alpha \ll \beta \ll \gamma$ to mean that `we can choose the positive
constants $\alpha$, $\beta$ and~$\gamma$ from right to left'.  More
precisely, there are functions $f$ and $g$ such that, given~$\gamma$,
whenever $\beta \leq f(\gamma)$ and $\alpha \leq g(\beta)$, the
subsequent statement holds.  Hierarchies of other lengths are defined
similarly.

\begin{lemma}\label{lm:gnp}
  Let $F$ be a labelled $k$-graph with $b$ vertices and $a$ edges.
  Suppose $1/n\ll 1/C \ll \gamma, 1/a, 1/b, 1/s$.  Let $V$ be an
  $n$-vertex set, and let $\cF_1, \dots, \cF_t$ be $t\le n^{s}$
  families of $\gamma n^{b}$ ordered $b$-sets on $V$.  If $p=p(n)$ is
  such that $\Phi_{F}(n,p) \ge C n$, then the following properties
  hold for the binomial random $k$-graph $G=\mathbb{G}^{(k)}(n,p)$
  on~$V$.
\begin{enumerate}[label=\rmlabel]
\item With probability at least $1-\exp(-n)$, every induced subgraph of $G$ of order $\gamma n$ contains a copy of $F$.\label{item-I}
\item With probability at least $1-\exp(-n)$, for every $i\in [t]$, there are at least $(\gamma/2) n^{b}p^{a}$ ordered $b$-sets in $\cF_i$ that span labelled copies of $F$.\label{item-II}
\item With probability at least $1-1/\sqrt n$, there are at most $2 n^{b} p^{a}$ ordered $b$-sets of vertices of $G$ that span labelled copies of $F$.\label{item-III}
\item With probability at least $1-1/{\sqrt{ n}}$, the number of overlapping (i.e., not vertex-disjoint) pairs of copies of $F$ in $G$ is at most $4b^2 n^{2b-1} p^{2a}$.\label{item-IV}
\end{enumerate}
\end{lemma}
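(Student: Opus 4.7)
\emph{Plan.} All four parts will be deduced by applying Janson's inequality~\eqref{eq:2} (for the lower-tail statements~\textup{(i)} and~\textup{(ii)}) or Chebyshev's inequality~\eqref{eq:1} (for the upper-tail statements~\textup{(iii)} and~\textup{(iv)}) to appropriate sums of indicator random variables, with Proposition~\ref{prop:1} supplying the uniform bound $\Delta_X \le O_b(n^{2b}p^{2a}/\Phi_F)$ in every case.

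For~\textup{(i)}, I would fix $U \subseteq V$ with $|U| = \gamma n$ and let $X_U$ count the ordered $b$-subsets of~$U$ that span labelled copies of~$F$ in~$G$. Then $\mathbb{E}(X_U) \ge \tfrac{1}{2} \gamma^b n^b p^a$ and, using $\Phi_F(\gamma n, p) \ge \gamma^b \Phi_F(n, p) \ge \gamma^b Cn$, Proposition~\ref{prop:1} applied inside~$U$ combined with Janson gives $\Pr(X_U = 0) \le \exp(-c_b \gamma^b Cn)$ for some $c_b > 0$; choosing $C$ large in terms of $\gamma$ and~$b$, a union bound over the $\binom{n}{\gamma n} \le 2^n$ sets~$U$ yields the claimed probability. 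Part~\textup{(ii)} is essentially identical, applying Janson with $t = \mathbb{E}(X_{\cF_i})/2$ to each family and union-bounding over the $t \le n^s$ families. Part~\textup{(iii)} is simpler still: Chebyshev applied to the total count~$X$ gives $\Pr(X \ge 2n^b p^a) \le \Delta_X/\mathbb{E}(X)^2 = O_b(1/(Cn)) \ll 1/\sqrt{n}$.

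The main obstacle is~\textup{(iv)}. I would decompose $Y = \sum_{F'} Y_{F'}$, where $F'$ ranges over the finitely many combined structures $F_1 \cup F_2$ that can arise from two labelled copies of~$F$ sharing at least one vertex, and $Y_{F'}$ counts labelled copies of~$F'$ in~$G$. The dominant structures~$F'$ have exactly one shared vertex and no shared edge (so $v_{F'} = 2b-1$, $e_{F'} = 2a$, and $\mathbb{E}(Y_{F'}) = \Theta(b^2 n^{2b-1} p^{2a})$); a short product estimate yields $\Phi_{F'} \ge \Phi_F^2/n \ge C^2 n$ for these, so Chebyshev gives $\Pr(Y_{F'} \ge 2\mathbb{E}(Y_{F'})) = O_b(1/(C^2 n)) \le 1/\sqrt{n}$ for~$n$ large. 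The remaining (``degenerate'') structures contribute only a negligible fraction of $n^{2b-1} p^{2a}$ in expectation, and are handled by a case analysis: Markov when $\mathbb{E}(Y_{F'}) = O(n^{2b-1} p^{2a}/n)$; Chebyshev when the shared subgraph still forces $\Phi_{F'} \ge Cn$; and the bound $Y_{F'} \le b!\,X$ from part~\textup{(iii)} for pairs of copies that share their full image set. Summing the contributions should then give $Y \le 4b^2 n^{2b-1} p^{2a}$ with probability at least $1 - 1/\sqrt{n}$. The bookkeeping needed to supply an adequate estimate for each combined structure $F'$ is the principal technical content of~\textup{(iv)}, while parts~\textup{(i)}--\textup{(iii)} are routine once Proposition~\ref{prop:1} is in hand.
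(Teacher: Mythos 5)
Your treatment of parts (i)--(iii) is correct and essentially the same as the paper's: Janson's inequality with the $\Delta$-bound of Proposition~\ref{prop:1} plus a union bound for (i) and (ii), and Chebyshev for (iii). The gap is in (iv), and it is exactly at the point you defer as ``bookkeeping''. Your trichotomy for the edge-sharing union types is not shown to be exhaustive. Consider a type $F'=F_1\cup F_2$ whose shared subgraph $J=F_1\cap F_2$ has an edge and satisfies $n^{v_J}p^{e_J}=\Theta(Cn)$, i.e.\ $J$ is (a copy of) a $\Phi_F$-minimizer; nothing in the hypotheses rules this out. Then $\mathbb{E}(Y_{F'})=\Theta_b\big(n^{2b-v_J}p^{2a-e_J}\big)=\Theta_b\big(n^{2b-1}p^{2a}/C\big)$, a constant fraction of the allowed total, so your Markov clause (``$\mathbb{E}(Y_{F'})=O(n^{2b-1}p^{2a}/n)$'') does not apply, and Markov at any threshold of order $n^{2b-1}p^{2a}$ gives failure probability $\Theta_b(1/C)$, far from $1/\sqrt n$. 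You are then forced into the Chebyshev clause, which via Proposition~\ref{prop:1} needs $\Phi_{F'}\gg\sqrt n$ --- but you give no argument for this, and it does not follow formally from $\Phi_F\ge Cn$: the dangerous subgraphs of $F'$ are of the form $H_1\cup H_2$ with $H_i\subseteq F_i$ both near-minimizers meeting in a sparse common part $K\subseteq J$ (say a single edge, for which $n^kp$ may be as large as $n^{k-c}$), and then $n^{v_H}p^{e_H}=n^{v_{H_1}}p^{e_{H_1}}\cdot n^{v_{H_2}}p^{e_{H_2}}\big/\big(n^{v_K}p^{e_K}\big)$ can a priori fall far below $\sqrt n$. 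Excluding such configurations requires a genuine structural argument about how dense subgraphs of $F$ may intersect inside $J$ (for the general $F$ of the lemma this is the whole difficulty of (iv), not a routine check). A small additional inaccuracy: for the one-shared-vertex types, $\Phi_{F'}\ge\Phi_F^2/n$ only holds when both halves of a subgraph carry edges; the correct statement is $\Phi_{F'}\ge\min\{\Phi_F,\Phi_F^2/n\}\ge Cn$, which is still sufficient there.

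The paper sidesteps all of this by never decomposing $Y$ into types for the concentration step. It first pins down the mean, $\tfrac12 b^2n^{2b-1}p^{2a}\le\mathbb{E}[Y]\le 2b^2n^{2b-1}p^{2a}$: pairs sharing exactly one vertex dominate, pairs sharing $\ge2$ vertices but no edge contribute $O(n^{2b-1}p^{2a}/n)$, and all edge-sharing pairs together contribute at most $\Delta_{X}\le b!2^{2b}n^{2b}p^{2a}/(Cn)$, which is negligible once $1/C\ll1/b$. It then applies Chebyshev once to the aggregate $Y$, bounding $\Delta_Y$ by writing each correlated pair of overlapping pairs as $F_1\cup F_2$ and $F_3\cup F_4$ with $E(F_1)\cap E(F_3)\neq\emptyset$ and parametrizing by $H_1=F_1\cap F_2$, $H_2=(F_1\cup F_2)\cap F_3$, $H_3=(F_1\cup F_2\cup F_3)\cap F_4$. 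Each $H_i$ has at least one vertex and embeds into a single copy of $F$, so $n^{v_{H_i}}p^{e_{H_i}}\ge\min\{n,\Phi_F\}\ge n$, whence $\Delta_Y=O_b\big(n^{4b-3}p^{4a}\big)=O_b\big(\mathbb{E}[Y]^2/n\big)$ and $\mathbb{P}\big(Y\ge4b^2n^{2b-1}p^{2a}\big)\le\mathbb{P}\big(Y\ge2\mathbb{E}[Y]\big)=O_b(1/n)\le1/\sqrt n$. In other words, only $\Phi_F$ ever enters, because every intersection graph used lies inside one copy of $F$. If you wish to keep your per-type route, you must supply the missing lower bounds on $\Phi_{F'}$ (or direct second-moment estimates) for every edge-sharing type; as written, that step is an unproved claim on which the whole of (iv) rests.
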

\begin{proof}
  Let $\cA$ be a family of ordered $b$-sets of vertices in~$V$.  For
  each $A\in \cA$, let~$I_A$ be the indicator random variable of the
  event that $A$ spans a labelled copy of~$F$ in~$G$.  Let
  $X_\cA=\sum_{A\in \cA} I_A$.  From the hypothesis
  that~$\Phi_{F} \ge C n$ and Proposition~\ref{prop:1}, we have
\begin{equation}\label{eq:delta}
\Delta_{X} \leq b! 2^{2b} n^{2b}p^{2a}/\Phi_{F}\le b! 2^{2b} n^{2b}p^{2a}/(C n).
\end{equation}
Furthermore, let $\cS$ consist of the edge sets of the labelled copies of $F$ spanned on $A$ in the complete $k$-graph on $V$ for all $A\in \cA$.
Since we can write $X_\cA=\sum_{S\in \cS} I_S$, where $I_S$ is the indicator variable for the event $S\subseteq E(G)$, we can apply~\eqref{eq:2} to $X_\cA$.

For~\ref{item-I}, fix a vertex set $W$ of $G$ with $|W|=\gamma n$.
Let $\cA$ be the family of all labelled $b$-sets in $W$.
Let $X_\cA$ be the random variable that counts the number of members of $\cA$ that span a labelled copy of $F$ and thus $\mathbb{E}[X_\cA]= (\gamma n)_{b} p^a$.
By~\eqref{eq:delta} and~\eqref{eq:2} and the fact that $1/C\ll \gamma, 1/b$, we have 
$\mathbb{P} (X_\cA=0) \le \exp( -2 n)$.
By the union bound, the probability that there exists a vertex set $W$ of size $\gamma n$ such that $X_\cA=0$ is at most
$2^n \exp(-2 n) \le \exp(-n)$,
which proves~\ref{item-I}. 

For~\ref{item-II}, fix $i\in [t]$ and let $X_{\cF_i}$ be the random variable that counts the members of $\cF_i$ that span $F$. 
Note that $\mathbb{E}[X_{\cF_i}]=\gamma n^{b}p^a$.
Thus~\eqref{eq:2} implies  that
$\mathbb{P}\big(X_{\cF_i}\leq (\gamma/2)n^{b}p^{a}\big)\le \exp(- 2n)$.
By the union bound and the fact that $n^{s} \exp(- 2n) \le \exp(- n)$,
we see that~\ref{item-II} holds.

For~\ref{item-III}, let $X_3$ be the random variable that counts the number of labelled copies of~$F$ in~$G$. 
Since $\mathbb{E}(X_3)= (n)_{b}p^{a}$, by~\eqref{eq:delta} and~\eqref{eq:1}, we obtain 
\[
\mathbb{P}(X_3\ge 2p^an^b)\leq \mathbb{P}(X_3\ge 2\mathbb{E}[X_3])\le \frac{\Delta_{X_3}}{\mathbb{E}[X_3]^2} \le \frac{ b! 2^{2b} n^{2b}p^{2a}/(C n) }{( (n)_{b}p^{a})^2} \le \frac1{\sqrt n}.
\]
For~\ref{item-IV}, let $Y$ be the random variable that denotes the number of overlapping pairs of copies of~$F$ in~$G$.
We first estimate $\mathbb{E}[Y]$.
We write $Y=\sum_{A\in \cQ}I_A$, where $\cQ$ is the collection of the edge sets of overlapping pairs of labelled copies of $F$ in the complete $k$-graph on $n$ vertices. 
Note that if two overlapping copies of $F$ do not share any edge, then they induce at most $2b-1$ vertices and exactly $2a$ edges.
Note that for $1\le i\le b$, there are 
\[
\binom{n}{2b-i} (2b-i)_{b}\binom{b}{i} b! = (n)_{2b - i} \binom b i (b)_i \le (n)_{2b - i}(b)_i ^2
\]
members of $\cQ$ whose two copies of $F$ share exactly $i$ vertices. 
Thus, the number of choices for the vertex sets of pairs of copies which induce at most $2b-2$ vertices is at most $\sum_{2\le i\le b} (n)_{2b - i} (b)_i^2 \le n^{2b-1}$.
By the definition of $\Delta_{X_3}$ and \eqref{eq:delta} we have
\[
{n}^{2b - 1} b^2 p^{2a}/2\le \mathbb{E}[Y]\le (n)_{2b - 1} b^2 \cdot p^{2a} + n^{2b-1} \cdot p^{2a} + \Delta_{X_3} \le 2 b^2 {n}^{{2b - 1}} p^{2a}.
\]

We next compute $\Delta_Y$. 
For each $A\in \cQ$, let $S_A$ denote the $k$-graph induced by $A$ (thus $S_A$ is the union of two overlapping copies of $F$).
For each $A, B\in \cQ$, write $S_A:=F_1\cup F_2$ and $S_B:=F_3\cup F_4$, where each $F_i$ is a copy of $F$ for $i\in [4]$ such that $E(F_1)\cap E(F_3)\neq \emptyset$.
Define $H_1:=F_1\cap F_2$, $H_2:=(F_1\cup F_2)\cap F_3$ and $H_3:=(F_1\cup F_2\cup F_3)\cap F_4$.
Since $V(F_1)\cap V(F_2)\neq\emptyset $, $V(F_3)\cap V(F_4)\neq\emptyset $, and $E(F_1)\cap E(F_3)\neq \emptyset$, we know that $v_{H_i}\ge 1$ for $i=1,2,3$.
We claim that $n^{v_{H_i}}p^{e_{H_i}}\ge n$ for $i=1,2,3$.
Indeed, since each $H_i$ is a subgraph of $F$, if $e_{H_i}\ge 1$, then $n^{v_{H_i}}p^{e_{H_i}}\ge \Phi_{F}\ge C n$; otherwise $e_{H_i}=0$ and then we have  $n^{v_{H_i}}p^{e_{H_i}} = n^{v_{H_i}} \ge n^1=n$.
So we have
\begin{equation}\label{eq:estnp}
n^{v_{H_1}}p^{e_{H_1}}\cdot n^{v_{H_2}}p^{e_{H_2}}\cdot n^{v_{H_3}}p^{e_{H_3}} \ge n^{3}.
\end{equation}

Now we define
$\Delta_{H_1, H_2, H_3}= \sum_{A, B} \mathbb{E}[I_A I_B]$,
where the sum is over the pairs $\{A, B\}$ with $A\cap B\neq \emptyset$ that generate $H_1, H_2, H_3$. 
Observe that the sum contains at most
\[
\binom n{4b-v_{H_1} - v_{H_2} -v_{H_3}} (4b-v_{H_1} - v_{H_2} -v_{H_3})_{b}^4 < n^{4b-(v_{H_1} + v_{H_2} + v_{H_3})} (4b)^{3b}
\]
terms.
Thus, from~\eqref{eq:estnp}, we obtain
\begin{align*}
\Delta_{H_1, H_2, H_3}= 
\sum_{A, B} \mathbb{E}[I_A I_B] \le (4b)^{3b} n^{4b-(v_{H_1} + v_{H_2} + v_{H_3})} p^{4a - (e_{H_1} + e_{H_2} + e_{H_3})}
 \le (4b)^{3b} n^{4b - 3} p^{4a}.
\end{align*}
Let $D=D(b,k,r)$ be the number of choices for $H_1, H_2, H_3$, thus
\[
\Delta_Y = \sum_{H_1, H_2, H_3} \Delta_{H_1, H_2, H_3} \le D (4b)^{3b} n^{4b-3}p^{4a}.
\]
Therefore, by~\eqref{eq:1} and the fact that $n$ is large enough, we get
\begin{align*}
\mathbb{P}\big(Y\ge 4b^2 n^{2b-1}p^{2a}) &\le \mathbb{P}\big(Y\ge 2\mathbb{E}[Y]) \le \frac{\Delta_Y}{\mathbb{E}[Y]^2} \le \frac{D(4b)^{3b} n^{4b-3}p^{4a}}{({n}^{{2b-1}} p^{2a}/2)^2} \le \frac1{\sqrt{n}}.
\end{align*}
This verifies~\ref{item-IV}.
\end{proof}

For $m\ge k+r-1$, denote by $P_{m}^{k,r}$ the $r$\th{} power of a
$k$-uniform tight path on $m$ vertices.  Similarly,
write~$C_{m}^{k,r}$ for the $r$\th{} power of a $k$-uniform tight
cycle on $m$ vertices.  For simplicity we say that $P_{m}^{k,r}$ is an
\emph{$(r,k)$-path} and~$C_{m}^{k,r}$ is an \emph{$(r,k)$-cycle}.  We
write $P_m^r$ for $P_{m}^{k,r}$ whenever~$k$ is clear from the
context.  Moreover, the ends of $P_m^r$ are its first and last $k+r-1$
vertices (with the order in the $(r,k)$-path).  We end this section by
computing $\Phi_{P_b^r}$ for the
$p=p(n)\ge n^{-\binom{k+r-2}{k-1}^{-1}-\epsilon}$ as in
Theorem~\ref{main}.
For $b\geq k+r-1$, let
\[
g(b):= \left( b - \frac{(k-1)(k+r-1)}{k} \right) \binom{k+r-2}{k-1}.
\]
Clearly $g$ is an increasing function.
Note that the number of edges in $P_m^{k,r}$ is given by
\begin{align*}
\big|E\big(P_m^{k,r}\big)\big|&=\binom{k+r-1}{k} + \left(m-(k+r-1)\right)\binom{k+r-2}{k-1}\\
			 & = \left( m - \frac{(k-1)(k+r-1)}{k} \right) \binom{k+r-2}{k-1} = g(m).
\end{align*}

\begin{prop}\label{prop:phiP}
  Suppose~$k\ge 2$, $r\geq 1$, $b\geq k+r-1$, $k+r\ge 4$ and $C>0$.  Let
  $\epsilon$ be such that
  $0<\epsilon<\min\big\{(2g(b))^{-1},\big(3\binom{k+r-1}{k}\big)^{-1}\big\}$.
  Suppose $1/n\ll1/C,\, 1/k,\, 1/r,\, 1/b$.  If
  $p=p(n)\ge n^{-\binom{k+r-2}{k-1}^{-1}-\epsilon}$, then
  $\Phi_{P_b^r} \ge C n$.
\end{prop}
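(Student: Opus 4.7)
Setting $\beta:=\binom{k+r-2}{k-1}$, the hypothesis $p\ge n^{-1/\beta-\epsilon}$ gives $n^{v_H}p^{e_H}\ge n^{v_H-(1/\beta+\epsilon)e_H}$. The plan is to show that for every subgraph $H\subseteq P_b^r$ with $e_H\ge 1$, the exponent $v_H-(1/\beta+\epsilon)e_H$ strictly exceeds $1$ by a fixed positive margin depending only on $k,r,b,\epsilon$; since $1/n\ll 1/C$, this will yield $n^{v_H}p^{e_H}\ge n^{1+\delta}\ge Cn$ for $n$ large enough, so that $\Phi_{P_b^r}\ge Cn$.

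The combinatorial heart of the argument is the bound $e_H\le g(v_H)$ whenever $v_H\ge k+r-1$ (with the trivial $e_H\le\binom{v_H}{k}$ otherwise). I would prove the former by ordering the vertices of $H$ as $v_{i_1},\dots,v_{i_{v_H}}$ with $i_1<\cdots<i_{v_H}$ and charging each edge of $H$ to its largest-indexed vertex: the vertex $v_{i_j}$ is the maximum of at most $\binom{\min(j-1,\,k+r-2)}{k-1}$ edges, because its $k-1$ partners in the edge must lie among the vertices of $H$ at positions inside the window $[i_j-(k+r-2),\,i_j-1]$. Summing over $j$ and applying the hockey-stick identity $\sum_{m=0}^{k+r-2}\binom{m}{k-1}=\binom{k+r-1}{k}$ gives exactly $g(v_H)$.

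Given this bound I split into two cases. If $v_H\ge k+r-1$, substituting $e_H\le g(v_H)$ and using the identity $g(v)/\beta=v-(k-1)(k+r-1)/k$ simplifies the exponent to $(k-1)(k+r-1)/k-\epsilon g(v_H)$. Since $g$ is increasing and $v_H\le b$, the hypothesis $\epsilon<1/(2g(b))$ forces $\epsilon g(v_H)<1/2$, so the exponent is strictly greater than $(k-1)(k+r-1)/k-1/2$; a short case check using $k+r\ge 4$ shows $(k-1)(k+r-1)/k\ge 3/2$ (tight for $k=2,r=2$), leaving a strict positive margin above $1$. If $k\le v_H<k+r-1$ (non-empty only when $r\ge 2$), then the function $v\mapsto v-\binom{v}{k}/\beta$ is non-decreasing on this range with minimum value $\ge 3/2$ (the value at $v=k$ is $k-1/\binom{k+r-2}{k-1}\ge 3/2$ given $k+r\ge 4$), and the hypothesis $\epsilon<1/(3\binom{k+r-1}{k})$ keeps $\epsilon\binom{v_H}{k}<1/3$, so the exponent is at least $3/2-1/3=7/6$.

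The main technical step is proving the tight combinatorial bound $e_H\le g(v_H)$; the rest is routine bookkeeping, but the two explicit upper bounds on $\epsilon$ were tailored precisely so that the two extremal subgraphs, namely the full $P_b^r$ in the first case and $K_{k+r-1}^{(k)}$ in the second, each leave a strictly positive gap above $1$ in the exponent.
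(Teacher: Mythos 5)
Your proof is correct and follows essentially the same route as the paper: the same two-regime split (subgraphs on at least $k+r-1$ vertices versus subgraphs inside a single clique window), the same identity $g(v)\binom{k+r-2}{k-1}^{-1}=v-(k-1)(k+r-1)/k$, and the same use of the two upper bounds on $\epsilon$ to leave a fixed margin above exponent $1$. The only real difference is presentational: you case on $v_H$ and prove the key bound $e_H\le g(v_H)$ explicitly via a charging/hockey-stick argument, whereas the paper cases on ranges of $e_H$ and takes the corresponding edge-count fact for subgraphs of $P_{b'}^r$ as an observation.
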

\begin{proof}
Let $H$ be a subgraph of $P_b^r$.
Since for any integer $k+r-1\le b'\le b$, any subgraph of $P_{b'}^r$ has at most $g(b')$ edges, we have the following observations.
\begin{enumerate}[label={(\alph*)}]
\item If $e_H> g(b')$ for some $b'\ge k+r-1$, then $v_H\ge b'+1$;\label{eq-edgea}
\item if $e_H > \binom{i}{k}$ for some $k-1\le i< k+r-1$, then $v_H\ge i+1$.\label{eq-edgeb}
\end{enumerate}

By~\ref{eq-edgea}, we have
\[
\min_{g(k+r-1)< e_H\le g(b)} n^{v_H} p^{e_H} = \min_{k+r-1\le b'< b}\left( \min_{g(b')< e_H\le g(b'+1)} n^{v_H} p^{e_H}\right) \ge \min_{k+r-1\le b'< b} n^{b'+1} p^{g(b'+1)}.
\]
Since $p\ge n^{-1/\binom{k+r-2}{k-1}-\epsilon}$, and $g(b'+1) >0$, the following holds for any $b'<b$:
\begin{multline*}
  \qquad
  n^{b'+1}p^{g(b'+1)}\ge
  n^{b'+1}\(n^{-1/\binom{k+r-2}{k-1}-\epsilon}\)^{g(b'+1)} \\
  = n^{-g(b'+1)\epsilon}n^{(k-1)(k+r-1)/k} \ge
  n^{-g(b)\epsilon}n^{(k-1)(k+r-1)/k}\ge C n,
  \qquad
\end{multline*}
where we used $(k-1)(k+r-1)/k\ge 3/2$ and $g(b)\epsilon< 1/2$.
Therefore, 
\begin{equation}\label{eq:estimate1}
\min_{g(k+r-1)< e_H\le g(b)} n^{v_H} p^{e_H} \geq C n.
\end{equation}
On the other hand, noting that $g(k+r-1)=\binom{k+r-1}{k}$, by~\ref{eq-edgeb} we have
\[
\min_{0< e_H\le g(k+r-1)} n^{v_H} p^{e_H} = \min_{k-1\le i< k+r-1} \left(\min_{\binom{i}{k}< e_H\le \binom{i+1}{k}} n^{v_H} p^{e_H} \right)\ge \min_{k-1\le i< k+r-1} n^{i+1} p^{\binom{i+1}{k}}.
\]
Since $p\ge n^{-1/\binom{k+r-2}{k-1}-\epsilon}$, and $\binom{i+1}{k}\epsilon\leq  1/3$ for any $k-1\le i\le k+r-2$, if $i\ge 2$, then
\[n^{i+1} p^{\binom{i+1}{k}} \ge n^{i+1} n^{-(1/\binom{k+r-2}{k-1}+\epsilon)\frac{i+1}{k}\binom{i}{k-1}} \ge n^{i+1 - \frac{i+1}{k}-\binom{i+1}{k}\epsilon}\ge C n.
\]
Otherwise $i=1$ and thus $k=2$, in which case we have $n^{i+1} p^{\binom{i+1}{k}}= n^2 p \ge C n$.
Therefore,
\begin{equation}\label{eq:estimate2}
\min_{0< e_H\le g(k+r-1)} n^{v_H} p^{e_H} \geq C n.
\end{equation}
From \eqref{eq:estimate1} and \eqref{eq:estimate2}, we have
$\Phi_{P_b^r} \ge  C n$, as desired.
\end{proof}

\section{The Connecting and Absorbing Lemmas}
\label{sec:abs-con}
For brevity, throughout the rest of this paper, we write
\begin{equation*}
h:=k+r-1\qcomma
t:=g(2h)\qcomma
c:=\binom{k+r-2}{k-1}^{-1}.
\end{equation*}
Recall that the ends of an $(r,k)$-path are ordered $h$-sets that 
span a copy of $K_h^{(k)}$ in~$H$.

\subsection{The Connecting Lemma}
Given a $k$-graph $H$ and two ordered $h$-sets of vertices $A$ and $B$
each spanning a copy of $K_h^{(k)}$ in $H$, we say that an ordered
$2h$-set of vertices $C$ \textit{connects}~$A$ and~$B$ if
$C\cap A=C\cap B=\emptyset$ and the concatenation
$\text{\textit{ACB}}$ spans a labelled copy of~$P_{4h}^r$.
We are now ready to state our connecting lemma.

\begin{lemma}[Connecting Lemma]\label{lm:conn}
  Suppose $1/n \ll\epsilon \ll \beta \ll \alpha'\ll 1/k, 1/r$.
  Let~$H$ be an $n$-vertex $k$-graph with
  $\delta_{k-1}(H)\ge \left( 1- c + \alpha' \right) {n}$ and suppose
  $p=p(n)\ge n^{-c-\epsilon}$.  Then a.a.s.\
  $H\cup \mathbb{G}^{(k)}(n,p)$ contains a set $\mathcal{C}$ of
  vertex-disjoint copies of $P_{2h}^r$ with $|\mathcal{C}|\le \beta n$
  such that, for every pair of disjoint ordered $h$-sets spanning
  a copy of $K_h^{(k)}$ in $H$, there are at least $\beta^2 n/(2h)^2$
  ordered copies of $P_{2h}^r$ in $\mathcal C$ that connect them.
\end{lemma}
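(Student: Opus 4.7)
The plan is to argue in two phases. Phase 1 is a supersaturation statement: for every valid query pair $(A, B)$ of disjoint ordered $h$-sets each spanning $K_h^{(k)}$ in $H$, I will show there are $\Omega(n^{2h})$ ordered $2h$-tuples $C$ with $ACB$ spanning $P_{4h}^r$ in $H \cup \mathbb{G}^{(k)}(n,p)$. The key idea is to greedily construct $(r,k)$-path extensions of $A$ and $B$ inside $H$ using the codegree condition, and then use $\mathbb{G}^{(k)}(n,p)$ (together with the many $H$-edges provided by the codegree) to handle the ``junction'' between the two extensions. Phase 2 then extracts the vertex-disjoint family $\mathcal{C}$ by random sampling and alteration.

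For Phase 1, extending any $(r,k)$-path in $H$ by one vertex requires the new vertex to be a common $H$-neighbor of the $\binom{h-1}{k-1} = 1/c$ many $(k-1)$-subsets of the last $h-1$ vertices; by inclusion--exclusion on $\delta_{k-1}(H) \ge (1-c+\alpha')n$, this common neighborhood has at least $(\alpha'/c)n$ vertices. Iterating $h$ times yields $\Omega(n^h)$ ordered forward extensions of $A$, each a $P_{2h}^r$ in $H$; analogously, $\Omega(n^h)$ backward extensions of $B$ in $H$. For each disjoint pair of such extensions, the full $4h$-tuple $ACB$ spans a labelled $P_{4h}^r$ provided the $j := g(4h) - 2g(2h) = (k-1)h/(kc)$ many \emph{junction edges}---the $k$-edges of $P_{4h}^r$ straddling the boundary between the two extensions, forming a fixed sub-$k$-graph $F$ of $P_{2h-2}^r$---lie in $H \cup \mathbb{G}^{(k)}(n,p)$. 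Since $\Phi_F \ge \Phi_{P_{2h-2}^r} \ge Cn$ by Proposition~\ref{prop:phiP}, applying Lemma~\ref{lm:gnp}\,\ref{item-II} to the at most $n^{2h}$ families of junction sites indexed by query pairs, combined with a codegree analysis showing that many junction edges already lie in $H$ ``for free'', delivers the $\Omega(n^{2h})$ per-query count.

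For Phase 2, I include each copy of $P_{2h}^r$ in $H \cup \mathbb{G}^{(k)}(n,p)$ independently in a candidate family $\mathcal{C}_0$ with probability $q = \Theta(\beta/n^{2h-1})$. Lemma~\ref{lm:gnp}\,\ref{item-III} bounds the total number of $P_{2h}^r$ copies by $O(n^{2h})$, so $\mathbb{E}|\mathcal{C}_0| = O(\beta n)$; Lemma~\ref{lm:gnp}\,\ref{item-IV} bounds the expected number of vertex-intersecting pairs in $\mathcal{C}_0$ by $O(q^2 n^{4h-1}) = O(\beta^2 n)$. Removing one copy from each such intersecting pair yields a vertex-disjoint $\mathcal{C}$ with $|\mathcal{C}| \le \beta n$. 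A Chebyshev-type concentration argument on the per-query coverage, combined with a union bound over the $O(n^{2h})$ query pairs, then delivers at least $\beta^2 n/(2h)^2$ connectors per query a.a.s.

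The main obstacle I anticipate is the $\Omega(n^{2h})$ lower bound in Phase 1. A naive count gives only $\Omega(n^{2h} p^j)$ connectors per query (obtained by requiring all junction edges to come from $\mathbb{G}^{(k)}(n,p)$), which is too weak for the sampling step to succeed with constant $\beta$. Obtaining the stronger bound requires a subtle interplay between the codegree condition on $H$ and the random edges of $\mathbb{G}^{(k)}(n,p)$: for a positive fraction of extension pairs, most junction edges should already lie in $H$, and $\mathbb{G}^{(k)}(n,p)$ is only needed to cover a bounded number of missing ones. Carefully controlling the dependencies between junction edges, likely via a Lov\'asz Local Lemma argument or via a tailored Janson-style application of Lemma~\ref{lm:gnp}\,\ref{item-II}, is the technical crux of the argument.
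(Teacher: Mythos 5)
Your Phase 1 target is the wrong invariant, and it is in fact unattainable under the hypotheses of the lemma. You ask for $\Omega(n^{2h})$ connectors per query pair, with ``most junction edges already in $H$''. Nothing in the codegree hypothesis supports this: the condition $\delta_{k-1}(H)\ge(1-c+\alpha')n$ only guarantees one-sided extensions, and at the junction a vertex must simultaneously satisfy constraints coming from both fixed sides, which is exactly what this codegree cannot force. A concrete counterexample already occurs for $r=1$, $k\ge 3$ (which the lemma covers): let $H$ be two disjoint cliques of order $n/2$, so $\delta_{k-1}(H)\ge \alpha n=(1-c+\alpha)n$, and let the two ordered $h$-sets lie in different cliques. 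Then every connector must use at least one edge of $\mathbb{G}^{(k)}(n,p)$, and the number of ordered $2h$-tuples containing a prescribed random edge in a prescribed position is $O\big(e(\mathbb{G}^{(k)}(n,p))\,n^{2h-k}\big)=O(n^{2h}p)=o(n^{2h})$ a.a.s.\ at $p=n^{-c-\epsilon}$. So no supersaturation of order $n^{2h}$ is possible, and no Local Lemma or Janson variant can rescue it; the ``interplay'' you hope for (most junction edges free from $H$) simply does not exist for extremal $H$. Moreover, asking only \emph{some} junction edges from $G$ destroys the clean structure needed to apply Lemma~\ref{lm:gnp}, whose events are ``a fixed labelled $F$ appears on this tuple in $G$''; with an $H$-dependent set of required random edges per tuple you no longer have a single $F$ to feed into Janson's inequality.

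The second error is your claim that the ``naive'' count $\Theta(\beta n^{2h}p^{t})$ is too weak for the sampling step: it is exactly what the paper uses, and it suffices once you calibrate the sampling probability against $p^{t}$. Take, for each pair $\{S,S'\}$, a family of $24\beta n^{2h}$ middle tuples whose attachment edges to $S$ and $S'$ are already guaranteed by $H$ (this is what the codegree extension gives), and require the \emph{entire} middle block to span $P_{2h}^r$ in $G$. Lemma~\ref{lm:gnp}\,\ref{item-II}, \ref{item-III}, \ref{item-IV} (via Proposition~\ref{prop:phiP}) give per-pair counts $\ge 12\beta n^{2h}p^{t}$, a global count $\le 2n^{2h}p^{t}$, and $\le 4(2h)^2n^{4h-1}p^{2t}$ overlapping pairs. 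Now sample with $q=\beta/\big(2(2h)^2n^{2h-1}p^{t}\big)$ rather than your $q=\Theta(\beta/n^{2h-1})$; this is a legitimate probability because $\epsilon<1/(2g(2h))$ and $k+r\ge4$ force $n^{2h-1}p^{t}\ge n^{1/2}$, and it yields $|\mathcal C'|\le\beta n$, at least $3\beta^2 n/(2h)^2$ connectors per pair, and at most $2\beta^2 n/(2h)^2$ overlapping pairs, after which deleting one member of each overlapping pair finishes the proof. With your value of $q$ (and the correct counts, which carry the factors $p^{t}$ and $p^{2t}$ you dropped when quoting Lemma~\ref{lm:gnp}\,\ref{item-III} and \ref{item-IV}), the per-pair coverage after sampling would be only $\Theta(\beta^2np^{t})=o(n)$, far below the required $\beta^2n/(2h)^2$. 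So the fix is not a stronger Phase 1 but a rescaled Phase 2.
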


\begin{proof}
Let $\cS$ be the set of pairs of disjoint ordered $h$-sets that each span a copy of $K_h^{(k)}$ in~$H$.
Fix $\{S, S'\}\in \cS$ and write $S:=(v_1, \dots, v_{h})$ and $S':=(w_{h}, \dots, w_1)$.
Since $\delta_{k-1}(H)\ge \left( 1- c + \alpha' \right) {n}$, we can extend $S$ to an $(r,k)$-path with vertices $(v_1, \dots, v_{2h})$ such that the vertices of this $(r,k)$-path are disjoint with $\{w_{h}, \dots, w_1\}$ and there are at least $(\alpha' n/2)^{h}$ choices for the ordered set $(v_{h+1}, \dots, v_{2h})$.
Similarly, we can extend $S'$ to an $(r,k)$-path $(w_{2h}, \dots, w_1)$ such that the vertices of this $(r,k)$-path are disjoint with $\{v_1, \dots, v_{2h}\}$ and there are at least $(\alpha' n/2)^{h}$ choices for the ordered set $(w_{2h}, \dots, w_{h+1})$. 
So there are at least $(\alpha' n/2)^{2h} \ge 24\beta n^{2h}$ choices for the ordered $2h$-sets
$(v_{h+1}, \dots, v_{2h}, w_{2h}, \dots, w_{h+1})$.
Let $\mathcal C_{S, S'}$ be a collection of exactly $24\beta n^{2h}$ such ordered $2h$-sets of vertices.
Clearly if an ordered set~$C$ in $\mathcal C_{S, S'}$ spans a copy of $P_{2h}^r$, then $C$ connects $S$ and $S'$.

Now we will use the edges of $G=\mathbb{G}^{(k)}(n, p)$ to obtain the desired copies of $P^r_{2h}$ that connect the pairs in $\cS$.
Let $\cT$ be the set of all labelled copies of $P_{2h}^r$ in $G$.
We claim that the following properties hold with probability at least
$1-3/\sqrt n$: 
\begin{enumerate}[label={(\alph*)}]
\item\label{item:a} $|\cT|\le 2 p^{t} n^{2h} $;
\item\label{item:b} for every $\{S,S'\}\in \cS$, at least $12\beta p^{t} n^{2h}$ members of $\cT$ connect $S$ and $S'$;
\item\label{item:c} the number of overlapping pairs of members of $\cT$ is at most $4(2h)^2 p^{2t}n^{4h-1}$.
\end{enumerate}

To see that the claim above holds, note that by
Proposition~\ref{prop:phiP}, we can apply Lemma~\ref{lm:gnp} with
$F=P_{2h}^r$, $\gamma=24\beta$ and $\mathcal C_{S, S'}$ in place of
$\cF_i$.  Items~\ref{item:a}, \ref{item:b} and~\ref{item:c} follow,
respectively, from Lemma~\ref{lm:gnp}~\ref{item-III}, \ref{item-II}
and~\ref{item-IV}.

Next we select a random collection $\mathcal{C'}$ by including each
member of $\cT$ independently with probability
$q:=\beta/(2(2h)^2 n^{2h-1}p^{t})$.  By using Chernoff's inequality
(for~\ref{item:i} and~\ref{item:ii} below) and Markov's inequality
(for~\ref{item:iii} below), we know that there is a choice of
$\mathcal C'$ that satisfies the following properties:
\begin{enumerate}[label={(\roman*)}]
\item\label{item:i}$|\mathcal C'|\le 2q |\cT|\le \beta n$;
\item\label{item:ii} for every $\{S,S'\}\in \cS$, there are at least
$12\beta (q/2) n^{2h}p^{t}  = 3 \beta^2 n/(2h)^2$
members of $\mathcal C'$ that connect $S$ and $S'$;
\item\label{item:iii} the number of overlapping pairs of members of $\mathcal C'$ is at most $ 8(2h)^2q^2 n^{4h-1} p^{2t}= 2\beta^2 n/(2h)^2$.
\end{enumerate}
Deleting one member from each overlapping pair, we obtain a collection
$\mathcal C$ of vertex disjoint copies of $P_{2h}^r$ with
$|\mathcal C|\le \beta n$, and such that, for every pair of disjoint ordered
$h$-sets each spanning a~$K_h^{(k)}$ in~$H$, there are at
least $3\beta^2 n/(2h)^2 - 2\beta^2 n/(2h)^2= \beta^2 n/(2h)^2$ sets
of~$2h$ vertices connecting them.
\end{proof}

\subsection{The Absorbing Lemma}
In this subsection we prove our absorbing lemma.

\begin{lemma}[Absorbing Lemma]\label{lm:abs}
  Suppose $1/n\ll\epsilon \ll \zeta \ll \alpha\ll 1/k,1/r$.  Let~$H$
  be an $n$-vertex $k$-graph with
  $\delta_{k-1}(H)\ge \left( 1- c + \alpha \right) {n}$ and suppose
  $p=p(n)\ge n^{-c-\epsilon}$.  Then a.a.s.
  $H\cup \mathbb{G}^{(k)}(n, p)$ contains an $(r,k)$-path $P_{\abs}$
  of order at most $6h \zeta n$ such that, for every set
  $X\subseteq V(H)\setminus V(P_{\abs})$ with
  $|X|\leq \zeta^2n/(2h)^2$, there is an $(r,k)$-path in $H$ on
  $V(P_{\abs})\cup X$ that has the same ends as~$P_{\abs}$.
\end{lemma}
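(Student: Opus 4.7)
The plan is to build $P_{\abs}$ as an $(r,k)$-path consisting of many short ``absorbing gadgets'' chained together by connecting paths supplied by Lemma~\ref{lm:conn}, following the R\"odl--Ruci\'nski--Szemer\'edi absorbing-method template.  For each vertex $v\in V(H)$ I define a \emph{$v$-absorber} to be an ordered $(2h)$-tuple $A=(a_1,\ldots,a_{2h})$ of vertices distinct from $v$ such that $(a_1,\ldots,a_{2h})$ spans $P_{2h}^r$ in $H\cup\mathbb{G}^{(k)}(n,p)$ and, simultaneously, the extended tuple $(a_1,\ldots,a_h,v,a_{h+1},\ldots,a_{2h})$ spans $P_{2h+1}^r$ in $H$; since both paths share the ordered ends $(a_1,\ldots,a_h)$ and $(a_{h+1},\ldots,a_{2h})$, the former can be swapped for the latter inside $P_{\abs}$ to absorb $v$ without changing its endpoints.

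For the counting, a standard greedy extension from $v$ using the codegree hypothesis --- at each step the $\binom{k+r-2}{k-1}=1/c$ relevant codegree constraints jointly exclude at most $(1-\alpha/c)n$ vertices, leaving at least $(\alpha/c)n$ valid extensions --- produces at least $\gamma n^{2h}$ candidate tuples $A$ for which the $P_{2h+1}^r$-condition alone already holds in $H$.  Upgrading such a candidate to a full $v$-absorber only requires the ``straddle'' edges --- those edges of $P_{2h}^r$ on $(a_1,\ldots,a_{2h})$ that are not among the non-$v$ edges of $P_{2h+1}^r$ --- to lie in $\mathbb{G}^{(k)}(n,p)$; their number $s_0=s_0(k,r)$ is a fixed constant.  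Proposition~\ref{prop:phiP} gives $\Phi_{P_{2h}^r}\ge Cn$ for $p\ge n^{-c-\epsilon}$, so Lemma~\ref{lm:gnp}\ref{item-II}, applied uniformly in $v$ to the $n$ candidate families, yields a.a.s.\ at least $(\gamma/2)\,n^{2h}p^{s_0}$ full $v$-absorbers for every $v\in V(H)$.

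Next I would mimic the sampling argument of Lemma~\ref{lm:conn}: include each absorber independently with a probability $q$ tuned so that the expected number of sampled $v$-absorbers per vertex is of order $\zeta^2 n/(2h)^2$ while the expected total is of order $\zeta n$ (this tuning is possible because, by the codegree condition, a typical absorber already absorbs $\Omega(n)$ vertices).  Chernoff's inequality gives simultaneous concentration of both quantities; Lemma~\ref{lm:gnp}\ref{item-IV} together with Markov bounds the number of overlapping pairs of sampled absorbers, which I delete to obtain a vertex-disjoint family $\cF$ meeting both bounds up to constants.  A further application of Lemma~\ref{lm:conn} on $V(H)\setminus V(\cF)$ (with an auxiliary parameter $\beta\ll\zeta$) then supplies a connector pool $\mathcal{C}$ disjoint from $V(\cF)$, and I greedily chain the absorbers in $\cF$ into a single $(r,k)$-path $P_{\abs}$ by inserting, between consecutive absorbers, a $\mathcal{C}$-connector avoiding all previously used vertices --- possible because $|V(\cF)\cup V(\mathcal{C})|=O((\zeta+\beta)n)$ is dwarfed by the $\beta^2 n/(2h)^2$-many valid connectors between any two $K_h^{(k)}$-ends.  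This yields $P_{\abs}$ of order at most $2h|\cF|+2h(|\cF|-1)\le 6h\zeta n$.

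Finally, given any $X\subseteq V(H)\setminus V(P_{\abs})$ with $|X|\le\zeta^2 n/(2h)^2$, each $v\in X$ has at least $|X|$ absorbers in $\cF$, so a greedy matching (or Hall's theorem) assigns each $v$ to a distinct $A_v\in\cF$ that absorbs it; swapping each $A_v$'s $P_{2h}^r$-segment inside $P_{\abs}$ for its $P_{2h+1}^r$-counterpart on $V(A_v)\cup\{v\}$ (which lies in $H$) produces the desired $(r,k)$-path on $V(P_{\abs})\cup X$ with the same ends as $P_{\abs}$.  I expect the main obstacle to be the sampling step: balancing the per-vertex lower bound $\zeta^2 n/(2h)^2$ against the total upper bound $O(\zeta n)$ forces $q$ into a narrow window, whose non-emptiness hinges on the counting of the previous step being performed in the random model so as to pick up the factor $p^{s_0}$ (rather than the naive $H$-only counting), which in turn is exactly what Proposition~\ref{prop:phiP} and Lemma~\ref{lm:gnp}\ref{item-II} deliver.
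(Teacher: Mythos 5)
Your proposal follows the same blueprint as the paper's proof: fix a vertex $v$, use the codegree condition of $H$ to enumerate many candidate $2h$-tuples $(w_1,\ldots,w_{2h})$ for which the extended tuple already spans $P_{2h+1}^r$ in $H$, upgrade a $\Theta(p^{\cdot} n^{2h})$-fraction of these to genuine $v$-absorbers using Lemma~\ref{lm:gnp}\ref{item-II} (after verifying $\Phi_{P_{2h}^r}\ge Cn$ via Proposition~\ref{prop:phiP}), sample with probability $q\asymp \zeta/(p^t n^{2h-1})$, prune overlaps, chain via Lemma~\ref{lm:conn}, and absorb $X$ greedily. That is exactly the structure of the paper's argument. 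There is, however, one concrete step that would fail as you describe it, and a couple of smaller imprecisions.

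The genuine gap is in the chaining step. You apply Lemma~\ref{lm:conn} to $V'=V(H)\setminus V(\mathcal F)$, so the connector pool $\mathcal{C}$ only guarantees connectors between pairs of ordered $h$-sets \emph{inside $V'$} that span $K_h^{(k)}$. But the two ends of each absorber $A\in\mathcal F$ consist entirely of vertices of $V(\mathcal F)$, hence lie outside $V'$, and the Connecting Lemma says nothing about them. The paper's fix (and what you need) is to greedily extend each absorber $P_{2h}^r$ by $h$ vertices on each side into $V'\setminus V(\mathcal C)$, giving a $P_{4h}^r$ whose new ends sit in $V'\setminus V(\mathcal C)$ and can therefore be connected by $\mathcal C$; this extension is feasible precisely because $|V(\mathcal F)|+2h|\mathcal F|+|V(\mathcal C)|\ll \alpha n$, and it is also what produces the $(4h+2h)\zeta n=6h\zeta n$ bound on $|V(P_{\abs})|$ (your count $2h|\mathcal F|+2h(|\mathcal F|-1)$ omits the extra $2h$ per absorber). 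Two smaller points: Lemma~\ref{lm:gnp}\ref{item-II} with $F=P_{2h}^r$ gives $\ge(\gamma/2)n^{2h}p^{t}$ absorbers, not $p^{s_0}$ — if you want the weaker exponent $s_0$ you would need to apply the lemma to the straddle-edge subgraph $F'\subseteq P_{2h}^r$ (which is fine since $\Phi_{F'}\ge\Phi_{P_{2h}^r}$, but it is not what you invoke); and the absorbers and the connectors must be built from independent rounds of randomness ($G=G_1\cup G_2$ with $(1-p')^2=1-p$), which your ``further application of Lemma~\ref{lm:conn}'' leaves implicit.
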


We call the $(r,k)$-paths~$P_{\abs}$ in Lemma~\ref{lm:abs} \emph{absorbing
  paths}.  We now define \emph{absorbers}.

\begin{dfn}
  Let $v$ be a vertex of a $k$-graph.  An ordered $2h$-set of
  vertices $(w_1,\dots ,w_{2h})$ is a \emph{$v$-absorber} if
  $(w_1,\dots,w_{2h})$ spans a labelled copy of~$P_{2h}^r$ and
  $(w_1,\dots,w_{h},v,w_{h+1},\dots,w_{2h})$ spans a
  labelled copy of~$P_{2h+1}^r$.
\end{dfn}

\begin{proof}[Proof of Lemma~\ref{lm:abs}]
  Suppose $1/n\ll\epsilon \ll \zeta\ll \beta \ll \alpha\ll 1/k,\,1/r$.
  We split the proof into two parts. We first find a set $\mathcal{F}$
  of absorbers and then connect them to an $(r,k)$-path by using
  Lemma~\ref{lm:conn} (Connecting Lemma).  We will expose
  $G= \mathbb{G}^{(k)}(n, p)$ in two rounds: $G=G_1\cup G_2$
  with~$G_1$ and $G_2$ independent copies 
  of~$\mathbb{G}^{(k)}(n,p')$, where
  $(1-p')^2 = 1-p$.

Fix a vertex $v$.
By the codegree condition of $H$, we can extend $v$ to a labelled copy of~$P_{2h+1}^r$ in the form
$(w_1, \dots, w_{h}, v, w_{h+1},\dots, w_{2h})$
such that  there are at least $(\alpha n/2)^{2h} \ge 24\zeta n^{2h}$ choices for the ordered $2h$-set $(w_{1}, \dots, w_{2h})$.
Let $\mathcal A_{v}$ be a collection of exactly $24\zeta n^{2h}$ such ordered $2h$-sets.
By definition, if an ordered set $A$ in $\mathcal A_{v}$ spans a labelled copy of $P_{2h}^r$, then~$A$ is a $v$-absorber.

Now consider $G_1=\mathbb{G}^{(k)}(n, p')$ and let $\cT$ be the set of all labelled copies of $P_{2h}^r$ in $G_1$.
By Proposition~\ref{prop:phiP}, we can apply Lemma~\ref{lm:gnp} with $F=P_{2h}^r$ and $\cA_v$ in place of $\cF_i$.
Using the union bound we conclude that the following properties hold
with probability at least $1-3/\sqrt n$:
\begin{enumerate}[label={(\alph*)}]
\item $|\cT|\le 2 p^{t} n^{2h} $;\label{propc}
\item for every vertex $v$ in $H$, at least $12\zeta p^{t} n^{2h}$ members of $\cT$ are $v$-absorbers;\label{propb}
\item the number of overlapping pairs of members of $\cT$ is at most $4(2h)^2 p^{2t}n^{4h-1}$.\label{propd}
\end{enumerate}

Next we select a random collection $\mathcal{F}'$ by including each member of $\cT$ independently with probability $q=\zeta/(2(2h)^2 p^{t} n^{2h-1})$. 
In view of the properties above, by using Chernoff's inequality
(for~\ref{item:i.2} and~\ref{item:ii.2} below) and Markov's inequality
(for~\ref{item:iii.2} below), we know that there is a choice of
$\mathcal F'$ that satisfies the following properties:
\begin{enumerate}[label={(\roman*)}]
\item\label{item:i.2} $|\mathcal F'|\le \zeta n$;
\item\label{item:ii.2} for every vertex $v$, at least
$12\zeta (q/2) p^{t} n^{2h} = 3\zeta^2 n/(2h)^2$
members of $\mathcal F'$ are $v$-absorbers;
\item\label{item:iii.2} there are at most $8(2h)^2 q^2 n^{4h-1} p^{2t}= 2\zeta^2 n/(2h)^2$ overlapping pairs of members of $\mathcal F'$.
\end{enumerate}
By deleting from $\cF'$ one member from each overlapping pair and all members that are not in $\cT$, we obtain a collection $\cF$ of vertex-disjoint copies of $P_{2h}^r$ such that $|\cF|\le \zeta n$, and for every vertex~$v$, there are at least
$3\zeta^2 n/(2h)^2 - 2\zeta^2 n/(2h)^2= \zeta^2 n/(2h)^2$ $v$-absorbers.

Now we connect these absorbers using Lemma~\ref{lm:conn}.
Let $V'=V(H)\setminus V(\mathcal{F})$ and $n'=|V'|$. In particular, $n'\geq n/2$ is sufficiently large. 
Now consider $H'=H[V']$ and $G'=G_2[V'] = \mathbb{G}^{(k)}(n',p')$. 
Since $|V(\mathcal{F})|\leq  2h\cdot \zeta n \leq \alpha^2 n$, we have
$\delta_{k-1}(H')\ge \left( 1- c + \alpha/2 \right) {n}$. 
We apply Lemma~\ref{lm:conn} on $H'$ and $G'$ with $\alpha'=\alpha/2$ and $\beta$, and conclude that \aas~$H'\cup G'$ contains a set $\mathcal{C}$ of vertex-disjoint copies of $P_{2h}^r$ such that $|\mathcal{C}|\leq \beta n$ and for every pair of ordered $h$-sets in $V'$, there are at least $\beta^2 n$ members of $\mathcal{C}$ connecting them.

For each copy of $P_{2h}^r$ in $\mathcal F$, we greedily extend its two ends by $h$ vertices such that all new paths are pairwise vertex disjoint and also vertex disjoint from $V(\mathcal C)$. 
This is possible because of the codegree condition of $H_0$ and $|V(\mathcal{F})| + 2h|\mathcal{F}| + |V(\mathcal C)|\leq  2h \zeta n + 2h \zeta n + 2h\cdot \beta n< \alpha n/4$.
Note that both ends of these $(r,k)$-paths $P_{4h}^r$ are in $V'\setminus V(\mathcal C)$.
Since $\zeta n\le \beta^2 n'/(2h)^2$, we can greedily connect these $P_{4h}^r$.
Let $P_{\abs}$ be the resulting $(r,k)$-path.
By construction, $|V(P_{\abs})|\le (4h + 2h)\cdot \zeta n = 6h \zeta n$.
Moreover, for any $X\subseteq V\setminus V(P_{\abs})$ such that $|X|\leq \zeta^2n/(2h)$, since each vertex $v$ has at least $\zeta^2 n/(2h)^2$ $v$-absorbers in $\mathcal F$, we can absorb them greedily and conclude that there is an $(r,k)$-path on $V(P_{\abs})\cup X$ that has the same ends as $P_{\abs}$.
\end{proof}

\section{Proof of Theorem~\ref{main}}\label{sec:main}

We now combine Lemmas~\ref{lm:conn}~and~\ref{lm:abs} to prove
Theorem~\ref{main}.

\begin{proof}[Proof of Theorem~\ref{main}]
Suppose $1/n \ll \epsilon\ll \beta \ll \zeta\ll \alpha, 1/k, 1/r$.
Furthermore, recall that $c:=\binom{k+r-2}{k-1}^{-1}$ and suppose $H\cup \mathbb{G}^{(k)}(n, p)$ is an $n$-vertex $k$-graph with 
$\delta_{k-1}(H)\ge \left( 1- c + \alpha \right) {n}$
and $p=p(n)\ge n^{- c - \epsilon}$.
We will expose $G:=\mathbb{G}^{(k)}(n, p)$ in three rounds: $G=G_1\cup
G_2\cup G_3$ with~$G_1$, $G_2$ and~$G_3$ three independent copies of
$\mathbb{G}^{(k)}(n, p')$, where $(1-p')^3=1-p$.
Note that $p' >p/3 > n^{-c-2\epsilon}$.

By Lemma~\ref{lm:abs} with $2\epsilon$ in place of $\epsilon$, \aas~the $k$-graph $H\cup G_1$ contains an absorbing $(r,k)$-path $P_{\abs}$ of order at most $6h \zeta n$, that is, for every set $X\subseteq V(H)\setminus V(P_{\abs})$ such that $|X|\leq \zeta^2n/(2h)^2$, there is an $(r,k)$-path in $H$ on $V(P_{\abs})\cup X$ which has the same ends as~$P_{\abs}$. 
Let $V'= V(H)\setminus V(P_{\abs})$ and $n'=|V'|$. In particular, $n'\ge (1-6h\zeta)n$ and, since $\zeta$ is small enough, we have $(n')^{c+\epsilon} \ge n^{c+\epsilon}/2$.
Thus $p' > p/2 \ge n^{-c-\epsilon}/2 \ge (n')^{-c-\epsilon}/4 \ge (n')^{-c-2\epsilon}$.

Now consider $H'=H[V']$ and let $G_2':=\mathbb{G}^{(k)}(n', p')$ be the subgraph of $G_2$ induced by~$V'$.
Note that $\delta_{k-1}(H')\ge \delta_{k-1}(H) - |V(P_{\abs})|\ge \left( 1- c + \alpha/2 \right) {n'}$.
By Lemma~\ref{lm:conn}, \aas~the $k$-graph $H'\cup G_2'$ contains a set $\mathcal{C}$ of vertex-disjoint copies of $P_{2h}^r$ such that $|\mathcal{C}|\le \beta n$ and for every pair of disjoint ordered $h$-sets in $V'$ that each spans a copy of $K_h^{(k)}$, there are at least
$\beta^2 n'/(2h)^2$ members of $\mathcal C$ connecting them.
Since $|V(\mathcal C)|+|V(P_{\abs})|\le 2h\cdot \beta n+ 6h \zeta n\le \alpha n/2$, we can greedily extend the two ends of $P_{\abs}$ by $h$ vertices so that the two new ends $E_1, E_2$ are in $V'\setminus V(\mathcal C)$.

Let $m:=g^{-1}(1/(2\epsilon))$.
Note that $m\ge 1/\sqrt{\epsilon}$ because $\epsilon$ is small enough and $g$ is linear.
By Proposition~\ref{prop:phiP}, we can apply Lemma~\ref{lm:gnp}~\ref{item-I} with $b=m$ on $G_3$ and conclude that \aas~every induced subgraph of $G_3$ of order $\beta n$ contains a copy of $P_m^r$.
Thus we can greedily find at most $\sqrt\epsilon n$ vertex-disjoint copies of $P_m^r$ in $V'\setminus (V(\mathcal C)\cup E_1\cup E_2)$, which together covers all but at most $\beta n$ vertices of $V'\setminus V(\mathcal C)$.
Since $\sqrt\epsilon n+1\le \beta^2 n'/(2h)^2$, we can greedily connect these $(r,k)$-paths $P_m^r$ and $P_{\abs}$ to an $(r,k)$-cycle $Q^r$.
Let $R:=V(H)\setminus V(Q^r)$ and
note that $|R|\le |V(\mathcal C)|+\beta n\le (2h+1) 2\beta n\le \zeta^2 n/(2h)^2$.
Since $P_{\abs}$ is an absorber, there is an $(r,k)$-path on $V(P_{\abs})\cup R$ which has the same ends as $P_{\abs}$.
So we can replace $P_{\abs}$ by this $(r,k)$-path in $Q^r$ and obtain the $r$\th{} power of a tight Hamilton cycle.

Moreover, since all previous steps can be achieved \aas, by the union bound, $H\cup G$ \aas~contains the desired $r$\th{} power of a tight Hamilton cycle.
\end{proof}

\section{Concluding remarks}
\label{sec:concluding}

Let us briefly discuss the hypotheses in Theorem~\ref{main}.  Note
that, for~$r=1$, the condition in~\eqref{eq:main_min_deg} is simply
$\delta_{k-1}(H)\geq\alpha n$, with~$\alpha$ any arbitrary positive
constant.  Thus, in this case, our theorem is in the spirit of the
original Bohman, Frieze and Martin~\cite{BFM} set-up, in the sense
that we have a similar minimum degree condition on the deterministic
graph~$H$.  However, if~$r>1$, then our minimum
condition~\eqref{eq:main_min_deg} is of the form
$\delta_{k-1}(H)\geq(\sigma+\alpha)n$ for some~$\sigma=\sigma(k,r)>0$
(and arbitrarily small~$\alpha>0$).  Thus, for~$r>1$, our result is
more in line with Theorem~\ref{thm:BDF} of Bennett, Dudek and
Frieze~\cite{BeDuFr16} (in fact, we have~$\sigma(2,2)=1/2$ in our
result, which matches the minimum degree condition in
Theorem~\ref{thm:BDF}).  It is natural to ask whether one can weaken
the condition in~\eqref{eq:main_min_deg}
to~$\delta_{k-1}(H)\geq\alpha n$, that is, whether one can
have~$\sigma=0$.  This problem was settled positively by B\"ottcher,
Montgomery, Parczyk and Person for graphs~\cite{BMPP}.  However, the
problem remains open for $k$-graphs ($k\geq3$).

\begin{question}
  \label{prog:main}
  Let integers $k\ge3$ and $r\ge2$ and $\alpha>0$ be given.  Is there
  $\epsilon>0$ such that, if~$H$ is a $k$-graph on~$n$ vertices
  with~$\delta_{k-1}(H)\ge\alpha n$
  and $p=p(n)\ge n^{-\binom{k+r-2}{k-1}^{-1}-\epsilon}$, then~\aas\
  $H\cup \mathbb{G}^{(k)}(n,p)$ contains the $r$\th{} power of a
  tight Hamilton cycle?
\end{question}

Two remarks on the value of~$\sigma=\sigma(k,r)$ in our degree
condition~\eqref{eq:main_min_deg} follow.  These remarks show that,
even though~$\sigma>0$ if~$r>1$, the value of~$\sigma$ is (in the
cases considered) below the value that guarantees that~$H$ on its own
contains the $r$\th{} power of a tight Hamilton cycle.

Let us first consider the case~$k=2$, that is, the case of graphs.  In
this case, $\sigma=1-1/r$ and condition~\eqref{eq:main_min_deg}
is~$\delta(H)\geq(1-1/r+\alpha)n$.  We observe that this condition
does \textit{not} guarantee that~$H$ contains the $r$\th{} power of a
Hamilton cycle; the minimum degree condition that does is
$\delta(H)\geq(1-1/(r+1))n=rn/(r+1)$, and this value is optimal.

Let us now consider the case $k=3$ and $4\mid n$.  In this case, a
construction of Pikhurko~\cite{Pik08} shows that the
condition~$\delta_2(H)\geq3n/4$ does not guarantee the existence of
the square of a tight Hamilton cycle in~$H$ (in fact, his
constructions is stronger and shows that this condition does not
guarantee a $K_4^{(3)}$-factor in~$H$).  Our minimum degree condition
for~$k=3$ and~$r=2$ is~$\delta_2(H)\geq(2/3+\alpha)n$.

Finally, a simple calculation shows that the expected number
of~$P_n^r$ in~$\GG^{(k)}(n,p)$ is~$o(1)$
if~$p\leq\big((1-\epsilon)e/n\big)^{\binom{k+r-2}{k-1}^{-1}}$
and~$\epsilon>0$.  Thus, for such a~$p$, a.a.s.~$\GG^{(k)}(n,p)$ does
\textit{not} contain the $r$\th{} power of a tight Hamilton cycle.

\renewcommand{\doitext}{DOI\,}
\renewcommand{\PrintDOI}[1]{\doi{#1}}
\renewcommand{\eprint}[1]{\href{http://arxiv.org/abs/#1}{arXiv:#1}}
\begin{bibdiv}
\begin{biblist}

\bib{BTW}{article}{
   author = {Balogh, J.},
   author = {Treglown, A.},
   author = {Wagner, A.~Z.},
    title = {Tilings in randomly perturbed dense graphs},
   eprint = {1708.09243},
     year = {2017},
    month = {Aug},
}

\bib{BMSSS1}{article}{
	Author = {Bastos, J. {de} O.},
	author={Mota, G. O.},
	author={Schacht, M.},
	author={Schnitzer, J.},
	author= {Schulenburg, F.},
   title={Loose Hamiltonian cycles forced by large
   ${(k-2)}$-degree---approximate version},
	DOI = {10.1137/16M1065732},
   journal={SIAM J. Discrete Math.},
   volume={31},
   date={2017},
   number={4},
   pages={2328--2347},
   issn={0895-4801},
 }

\bib{BMSSS2}{article}{
	Author = {Bastos, J. {de} O.},
	author={Mota, G. O.},
	author={Schacht, M.},
	author={Schnitzer, J.},
	author= {Schulenburg, F.},
	Date-Added = {2017-02-14 19:33:21 +0000},
	Date-Modified = {2017-02-14 19:33:21 +0000},
	Journal = {Contributions to Discrete Mathematics},
	note={To appear},
	Title = {Loose Hamiltonian cycles forced by large
          $(k-2)$-degree---sharp version}
      }

\bib{BeDuFr16}{article}{
	author = {Bennett, P.},
	author = {Dudek, A.},
	author =   {Frieze, A.},
	title = {Adding random edges to create the square of a Hamilton cycle},
archivePrefix = {arXiv},
   eprint = {1710.02716},
 primaryClass = {math.CO},
 keywords = {Mathematics - Combinatorics},
     year = {2017},
    month = {Oct}
}

\bib{BFM}{article}{
	AUTHOR = {Bohman, Tom},
	author={Frieze, Alan},
	author={Martin, Ryan},
	TITLE = {How many random edges make a dense graph {H}amiltonian?},
	JOURNAL = {Random Structures Algorithms},
	FJOURNAL = {Random Structures \& Algorithms},
	VOLUME = {22},
	YEAR = {2003},
	NUMBER = {1},
	PAGES = {33--42},
	ISSN = {1042-9832},
	MRCLASS = {05C80 (05C45 60C05)},
	MRNUMBER = {1943857},
	MRREVIEWER = {Bert Fristedt},
	DOI = {10.1002/rsa.10070},
	URL = {http://dx.doi.org/10.1002/rsa.10070},
}

\bib{BHKMPP}{article}{
	Author = {B\"ottcher, J.},
        author={Han, J.},
	author={Kohayakawa, Y.},
        author={Montgomery, R.},
	author={Parczyk, O.},
	author={Person, Y.},
        eprint = {1802.04707},
        year = {2018},
	Title = {Universality of bounded degree spanning trees in randomly perturbed graphs}}

\bib{BMPP}{article}{
	Author = {B\"ottcher, J.},
	author={Montgomery, R.},
	author={Parczyk, O.},
	author={Person, Y.},
        eprint = {1802.04603},
        year = {2018},
	Title = {Embedding spanning bounded degree subgraphs in randomly perturbed graphs}}

\bib{BHS}{article}{
	Author = {Bu{\ss}, E.},
	author={H{\`a}n, H.},
	author={Schacht, M.},
	Date-Added = {2017-02-14 19:33:33 +0000},
	Date-Modified = {2017-02-14 19:33:33 +0000},
	Doi = {10.1016/j.jctb.2013.07.004},
	Fjournal = {Journal of Combinatorial Theory. Series B},
	Issn = {0095-8956},
	Journal = {J. Combin. Theory Ser. B},
	Mrclass = {05C65 (05C45)},
	Mrnumber = {3127586},
	Mrreviewer = {Martin Sonntag},
	Number = {6},
	Pages = {658--678},
	Title = {Minimum vertex degree conditions for loose {H}amilton cycles in 3-uniform hypergraphs},
	Url = {http://dx.doi.org/10.1016/j.jctb.2013.07.004},
	Volume = {103},
	Year = {2013},
	Bdsk-Url-1 = {http://dx.doi.org/10.1016/j.jctb.2013.07.004}}

\bib{CzMo}{article}{
	Author = {Czygrinow, A.},
	author={Molla, T.},
	Date-Added = {2017-02-14 19:33:33 +0000},
	Date-Modified = {2017-02-14 19:33:33 +0000},
	Doi = {10.1137/120890417},
	Fjournal = {SIAM Journal on Discrete Mathematics},
	Issn = {0895-4801},
	Journal = {SIAM J. Discrete Math.},
	Mrclass = {05D40 (05C65)},
	Mrnumber = {3150175},
	Mrreviewer = {Deryk Osthus},
	Number = {1},
	Pages = {67--76},
	Title = {Tight codegree condition for the existence of loose {H}amilton cycles in 3-graphs},
	Url = {http://dx.doi.org/10.1137/120890417},
	Volume = {28},
	Year = {2014},
	Bdsk-Url-1 = {http://dx.doi.org/10.1137/120890417}}

\bib{Di52}{article}{
	author = {Dirac, G. A.},
	title = {Some Theorems on Abstract Graphs},
	journal = {Proceedings of the London Mathematical Society},
	volume = {s3-2},
	number = {1},
	publisher = {Oxford University Press},
	issn = {1460-244X},
	url = {http://dx.doi.org/10.1112/plms/s3-2.1.69},
	doi = {10.1112/plms/s3-2.1.69},
	pages = {69--81},
	year = {1952},
}

\bib{DuFr2}{article}{
	AUTHOR = {Dudek, Andrzej},
	author={Frieze, Alan},
	TITLE = {Loose {H}amilton cycles in random uniform hypergraphs},
	JOURNAL = {Electron. J. Combin.},
	FJOURNAL = {Electronic Journal of Combinatorics},
	VOLUME = {18},
	YEAR = {2011},
	NUMBER = {1},
	PAGES = {Paper 48, 14},
	ISSN = {1077-8926},
	MRCLASS = {05C80 (05C45 05C65)},
	MRNUMBER = {2776824},
}

\bib{DuFr1}{article}{
	AUTHOR = {Dudek, Andrzej},
	author={Frieze, Alan},
	TITLE = {Tight {H}amilton cycles in random uniform hypergraphs},
	JOURNAL = {Random Structures \& Algorithms},
	FJOURNAL = {Random Structures \& Algorithms},
	VOLUME = {42},
	YEAR = {2013},
	NUMBER = {3},
	PAGES = {374--385},
	ISSN = {1042-9832},
	MRCLASS = {05C80 (05C45 05C65)},
	MRNUMBER = {3039684},
	MRREVIEWER = {Andrew Clark Treglown},
	DOI = {10.1002/rsa.20404},
	URL = {http://dx.doi.org/10.1002/rsa.20404},
}

\bib{GPW}{article}{
	Author = {Glebov, R.},
	author={Person, Y.},
	author={Weps, W.},
	Date-Added = {2017-02-14 19:33:33 +0000},
	Date-Modified = {2017-02-14 19:33:33 +0000},
	Doi = {10.1016/j.ejc.2011.10.003},
	Fjournal = {European Journal of Combinatorics},
	Issn = {0195-6698},
	Journal = {European J. Combin.},
	Mrclass = {05C35 (05C45 05C65)},
	Mrnumber = {2864440},
	Mrreviewer = {Martin Sonntag},
	Number = {4},
	Pages = {544--555},
	Title = {On extremal hypergraphs for {H}amiltonian cycles},
	Url = {http://dx.doi.org/10.1016/j.ejc.2011.10.003},
	Volume = {33},
	Year = {2012},
	Bdsk-Url-1 = {http://dx.doi.org/10.1016/j.ejc.2011.10.003}}

\bib{HZ2}{article}{
	Author = {Han, J.},
	author={Zhao, Y.},
	Date-Added = {2017-02-14 19:33:21 +0000},
	Date-Modified = {2017-02-14 19:33:21 +0000},
	Doi = {10.1016/j.jcta.2015.01.004},
	Issn = {0097-3165},
	Journal = {J. Combin. Theory Ser. A},
	Keywords = {Regularity lemma},
	Number = {0},
	Pages = {194--223},
	Title = {Minimum codegree threshold for Hamilton $\ell$-cycles in k-uniform hypergraphs},
	Url = {http://www.sciencedirect.com/science/article/pii/S0097316515000059},
	Volume = {132},
	Year = {2015},
	Bdsk-Url-1 = {http://www.sciencedirect.com/science/article/pii/S0097316515000059},
	Bdsk-Url-2 = {http://dx.doi.org/10.1016/j.jcta.2015.01.004}}	

\bib{HZ1}{article}{
	Author = {Han, J.},
	author={Zhao, Y.},
	Date-Added = {2017-02-14 19:33:21 +0000},
	Date-Modified = {2017-02-14 19:33:21 +0000},
	Doi = {10.1016/j.jctb.2015.03.007},	
	Journal = {J. Combin. Theory Ser. B},
	Pages = {70--96},
	Title = {Minimum degree thresholds for loose {Hamilton} cycle in 3-uniform hypergraphs},
	Volume = {114},
	Year = {2015},
	Bdsk-Url-2 = {https://doi.org/10.1016/j.jctb.2015.03.007}}	
	
\bib{HZ_pert}{article}{
	Author = {Han, J.},
        author={Zhao, Y.},
        eprint = {1802.04586},
        year = {2018},
	Title = {Hamiltonicity in randomly perturbed
          hypergraphs}}
	
\bib{HS}{article}{
	Author = {H\`an, H.},
	author= {Schacht, M.},
	Date-Added = {2017-02-14 19:33:21 +0000},
	Date-Modified = {2017-02-14 19:33:21 +0000},
	DOI = {10.1016/j.jctb.2009.10.002},
	Issue = {3},
	Journal = {J. Combin. Theory Ser. B},
	Pages = {332--346},
	Title = {Dirac-type results for loose {Hamilton} cycles in uniform hypergraphs},
	Volume = {100},
	Year = {2010}}

\bib{JLR}{book}{
	AUTHOR = {Janson, Svante},
	author={\L uczak, Tomasz},
	author={Ruci\'nski, Andrzej},
	TITLE = {Random graphs},
	SERIES = {Wiley-Interscience Series in Discrete Mathematics and
		Optimization},
	PUBLISHER = {Wiley-Interscience, New York},
	YEAR = {2000},
	PAGES = {xii+333},
	ISBN = {0-471-17541-2},
	MRCLASS = {05C80 (60C05 82B41)},
	MRNUMBER = {1782847},
	MRREVIEWER = {Mark R. Jerrum},
	DOI = {10.1002/9781118032718},
	URL = {http://dx.doi.org/10.1002/9781118032718},
}

\bib{Karp}{article}{
	author = {Karp, Richard M.},
	TITLE = {Reducibility among combinatorial problems},
	BOOKTITLE = {Complexity of computer computations ({P}roc. {S}ympos., {IBM}
		{T}homas {J}. {W}atson {R}es. {C}enter, {Y}orktown {H}eights,
		{N}.{Y}., 1972)},
	PAGES = {85--103},
	PUBLISHER = {Plenum, New York},
	YEAR = {1972},
	MRCLASS = {68A20},
	MRNUMBER = {0378476},
	MRREVIEWER = {John T. Gill},
}

\bib{KKMO}{article}{
	Author = {Keevash, P.},
	author={K\"uhn, D.},
	author= {Mycroft, R.},
	author= {Osthus, D.},
	Date-Added = {2017-02-14 19:33:21 +0000},
	Date-Modified = {2017-02-14 19:33:21 +0000},
	DOI = {10.1016/j.disc.2010.11.013},
	Journal = {Discrete Math.},
	Number = {7},
	Pages = {544--559},
	Title = {Loose {Hamilton} cycles in hypergraphs},
	Volume = {311},
	Year = {2011}}

\bib{Korshunov} {article}{
	AUTHOR = {Kor\v sunov, A. D.},
	TITLE = {Solution of a problem of {P}. {E}rd\H os and {A}. {R}\'enyi on
		{H}amiltonian cycles in nonoriented graphs},
	JOURNAL = {Diskret. Analiz},
	NUMBER = {31 Metody Diskret. Anal. v Teorii Upravljaju\v s\v cih Sistem},
	YEAR = {1977},
	PAGES = {17--56, 90},
	MRCLASS = {05C35},
	MRNUMBER = {0543833},
}

\bib{KKS}{article}{
	AUTHOR = {Krivelevich, Michael},
	author={Kwan, Matthew},
	author={Sudakov, Benny},
	TITLE = {Cycles and matchings in randomly perturbed digraphs and
		hypergraphs},
	JOURNAL = {Combin. Probab. Comput.},
	FJOURNAL = {Combinatorics, Probability and Computing},
	VOLUME = {25},
	YEAR = {2016},
	NUMBER = {6},
	PAGES = {909--927},
	ISSN = {0963-5483},
	MRCLASS = {05C80 (05C35 05C65)},
	MRNUMBER = {3568952},
	DOI = {10.1017/S0963548316000079},
	URL = {http://dx.doi.org/10.1017/S0963548316000079},
}

\bib{KKS2}{article}{
AUTHOR = {Krivelevich, Michael},
	author={Kwan, Matthew},
	author={Sudakov, Benny},
  title={Bounded-degree spanning trees in randomly perturbed graphs},
  DOI = {10.1137/15M1032910},
  journal={SIAM Journal on Discrete Mathematics},
  volume={31},
  number={1},
  pages={155--171},
  year={2017},
  publisher={SIAM}
}

\bib{KMO}{article}{
	author={K\"uhn, D.},
	author= {Mycroft, R.},
	author= {Osthus, D.},
	Date-Added = {2017-02-14 19:33:21 +0000},
	Date-Modified = {2017-02-14 19:33:21 +0000},
	DOI = {10.1016/j.jcta.2010.02.010},
	Journal = {J. Combin. Theory Ser. A},
	Number = {7},
	Pages = {910--927},
	Title = {Hamilton $\ell$-cycles in uniform hypergraphs},
	Volume = {117},
	Year = {2010}}

\bib{KO}{article}{
	Author = {K\"uhn, D.},
	author= {Osthus, D.},
	Date-Added = {2017-02-14 19:33:21 +0000},
	Date-Modified = {2017-02-14 19:33:21 +0000},
	DOI = {10.1016/j.jctb.2006.02.004},
	Journal = {J. Combin. Theory Ser. B},
	Number = {6},
	Pages = {767--821},
	Title = {Loose {Hamilton} cycles in 3-uniform hypergraphs of high minimum degree},
	Volume = {96},
	Year = {2006}}	

\bib {KuOs12}{article}{
	AUTHOR = {K\"uhn, D.},
	author={Osthus, D.},
	TITLE = {On {P}\'osa's conjecture for random graphs},
	JOURNAL = {SIAM J. Discrete Math.},
	FJOURNAL = {SIAM Journal on Discrete Mathematics},
	VOLUME = {26},
	YEAR = {2012},
	NUMBER = {3},
	PAGES = {1440--1457},
	ISSN = {0895-4801},
	MRCLASS = {05C80 (05C45)},
	MRNUMBER = {3022146},
	MRREVIEWER = {A. G. Thomason},
	DOI = {10.1137/120871729},
	URL = {http://dx.doi.org/10.1137/120871729},
}

\bib {KoSaSz98a}{article}{
	AUTHOR = {Koml\'os, J.},
	author ={S\'ark\"ozy, G.},
	author={Szemer\'edi, E.},
	TITLE = {On the {P}\'osa-{S}eymour conjecture},
	JOURNAL = {J. Graph Theory},
	FJOURNAL = {Journal of Graph Theory},
	VOLUME = {29},
	YEAR = {1998},
	NUMBER = {3},
	PAGES = {167--176},
	ISSN = {0364-9024},
	MRCLASS = {05C45 (05C35)},
	MRNUMBER = {1647806},
	MRREVIEWER = {Ralph Faudree},
}

\bib {KoSaSz98b}{article}{
	AUTHOR = {Koml\'os, J.},
author ={S\'ark\"ozy, G.},
author={Szemer\'edi, E.},
	TITLE = {Proof of the {S}eymour conjecture for large graphs},
	JOURNAL = {Ann. Comb.},
	FJOURNAL = {Annals of Combinatorics},
	VOLUME = {2},
	YEAR = {1998},
	NUMBER = {1},
	PAGES = {43--60},
	ISSN = {0218-0006},
	MRCLASS = {05C45},
	MRNUMBER = {1682919},
	MRREVIEWER = {Akira Saito},
	DOI = {10.1007/BF01626028},
	URL = {http://dx.doi.org/10.1007/BF01626028},
}

\bib{McMy}{article}{
	Author = {McDowell, A.},
	author={Mycroft, R.},
	Title = {Hamilton {$\ell$}-cycles in randomly-perturbed hypergraphs},
	    eprint = {1802.04242},
     year = {2018},
   adsurl = {http://adsabs.harvard.edu/abs/2018arXiv180204242M},
  adsnote = {Provided by the SAO/NASA Astrophysics Data System},
  }

\bib{NeSk16}{article}{
	author = {Nenadov, R.},
	author = {{\v S}kori{\'c}, N.},
	title = {Powers of Hamilton cycles in random graphs and tight
          Hamilton cycles in random hypergraphs}, 
	archivePrefix = {arXiv},
	eprint = {1601.04034},
	keywords = {Mathematics - Combinatorics},
	year = {2016},
	adsurl = {http://adsabs.harvard.edu/abs/2016arXiv160104034N},
	adsnote = {Provided by the SAO/NASA Astrophysics Data System},
}

\bib{PP}{article}{
   author = {{Parczyk}, O.},
   author= {{Person}, Y.},
    title = {Spanning structures and universality in sparse hypergraphs},
    DOI = {10.1002/rsa.20690},
    journal = {Random Structures \& Algorithms},
     year = {2016},
    volume = {49},
    number = {4},
    pages= {819--844}
    }

\bib{Pik08}{article}{
    AUTHOR = {Pikhurko, Oleg},
     TITLE = {Perfect matchings and {$K^3_4$}-tilings in hypergraphs of
              large codegree},
   JOURNAL = {Graphs Combin.},
  FJOURNAL = {Graphs and Combinatorics},
    VOLUME = {24},
      YEAR = {2008},
    NUMBER = {4},
     PAGES = {391--404},
      ISSN = {0911-0119},
   MRCLASS = {05C65 (05C70)},
  MRNUMBER = {2438870},
MRREVIEWER = {Deryk Osthus},
       DOI = {10.1007/s00373-008-0787-7},
       URL = {http://dx.doi.org/10.1007/s00373-008-0787-7},
}

\bib{Posa}{article}{
  AUTHOR = {P\'osa, L.},
TITLE = {Hamiltonian circuits in random graphs},
JOURNAL = {Discrete Math.},
FJOURNAL = {Discrete Mathematics},
VOLUME = {14},
YEAR = {1976},
NUMBER = {4},
PAGES = {359--364},
ISSN = {0012-365X},
MRCLASS = {05C35},
MRNUMBER = {0389666},
MRREVIEWER = {F. Harary},
DOI = {10.1016/0012-365X(76)90068-6},
URL = {http://dx.doi.org/10.1016/0012-365X(76)90068-6},
}

\bib{RRRSS}{article}{
   author = {Reiher, C.},
   author = {R{\"o}dl, V.},
   author = {Ruci{\'n}ski, A.},
   author =  {Schacht, M.},
   author = {Szemer{\'e}di, E.},
    title = {Minimum vertex degree condition for tight Hamiltonian cycles in 3-uniform hypergraphs},
archivePrefix = {arXiv},
   eprint = {1611.03118},
 primaryClass = {math.CO},
 keywords = {Mathematics - Combinatorics},
     year = {2016},
    month = {nov},
   adsurl = {http://adsabs.harvard.edu/abs/2016arXiv161103118R},
}

\bib{Ri00}{article}{
	AUTHOR = {Riordan, O.},
	TITLE = {Spanning subgraphs of random graphs},
	JOURNAL = {Combin. Probab. Comput.},
	FJOURNAL = {Combinatorics, Probability and Computing},
	VOLUME = {9},
	YEAR = {2000},
	NUMBER = {2},
	PAGES = {125--148},
	ISSN = {0963-5483},
	MRCLASS = {05C80},
	MRNUMBER = {1762785},
	MRREVIEWER = {Lyuben R. Mutafchiev},
	DOI = {10.1017/S0963548399004150},
	URL = {http://dx.doi.org/10.1017/S0963548399004150},
}

\bib{RoRu14}{article}{
	Author = {R{\"o}dl, V.},
	author={Ruci{\'n}ski, A.},
	Date-Added = {2017-02-14 19:33:33 +0000},
	Date-Modified = {2017-02-14 19:33:33 +0000},
	Doi = {10.7151/dmgt.1743},
	Fjournal = {Discussiones Mathematicae. Graph Theory},
	Issn = {1234-3099},
	Journal = {Discuss. Math. Graph Theory},
	Mrclass = {05D05 (05C65)},
	Mrnumber = {3194042},
	Mrreviewer = {Peter James Dukes},
	Number = {2},
	Pages = {361--381},
	Title = {Families of triples with high minimum degree are {H}amiltonian},
	Url = {http://dx.doi.org/10.7151/dmgt.1743},
	Volume = {34},
	Year = {2014},
	Bdsk-Url-1 = {http://dx.doi.org/10.7151/dmgt.1743}}

\bib{RoRuSz06}{article}{
	Author = {R\"odl, V.},
author={Ruci\'nski, A.},
author={Szemer\'edi, E.},
	title={A Dirac-type theorem for 3-uniform hypergraphs},
	journal={Combin. Probab. Comput.},
	volume={15},
	date={2006},
	number={1-2},
	pages={229--251},
	issn={0963-5483},
	doi={10.1017/S0963548305007042},
}

\bib{RRS08}{article}{
	Author = {R\"odl, V.},
author={Ruci\'nski, A.},
author={Szemer\'edi, E.},
	Date-Added = {2017-02-14 19:33:33 +0000},
	Date-Modified = {2017-02-14 19:33:33 +0000},
	DOI = {10.1007/s00493-008-2295-z},
	Journal = {Combinatorica},
	Number = {2},
	Pages = {229--260},
	Title = {An approximate {D}irac-type theorem for k-uniform hypergraphs},
	Volume = {28},
	Year = {2008}}

\bib{RRS11}{article}{
		Author = {R\"odl, V.},
	author={Ruci\'nski, A.},
	author={Szemer\'edi, E.},
	Date-Added = {2017-02-14 19:33:33 +0000},
	Date-Modified = {2017-02-14 19:33:33 +0000},
	DOI = {10.1016/j.aim.2011.03.007},
	Journal = {Advances in Mathematics},
	Number = {3},
	Pages = {1225--1299},
	Title = {Dirac-type conditions for {Hamiltonian} paths and cycles in 3-uniform hypergraphs},
	Volume = {227},
	Year = {2011}}



\end{biblist}
\end{bibdiv}
\endgroup
\end{document}